\theoremstyle{plain}
\newtheorem{lemma}[equation]{Lemma}
\newtheorem{theorem}[equation]{Theorem}
\newtheorem{corollary}[equation]{Corollary}
\theoremstyle{definition}
\newtheorem{definition}[equation]{Definition}
\numberwithin{equation}{section}
\def\norm#1.#2.{\lVert#1\rVert_{#2}}
\def\Norm#1.#2.{\bigl\lVert#1\bigr\rVert_{#2}}
\def\NOrm#1.#2.{\Bigl\lVert#1\Bigr\rVert_{#2}}
\def\NORm#1.#2.{\biggl\lVert#1\biggr\rVert_{#2}}
\def\NORM#1.#2.{\Biggl\lVert#1\Biggr\rVert_{#2}}
\def\ind{\textnormal{\textbf 1}}
\def\R{\mathbb R}
\def\Ms { \mathsf M _\text{S}}
\def\MM { \mathsf M _1}
\def\MMw {\mathsf M_1 ^\mu}
\def\Mw{\mathsf M  _\text{S} ^{w} }
\def\Cs{\mathsf C_{\mathsf {S},w}}
\def\Cw{\mathsf C _{\mathsf{S}} ^w}
\def\CC{\mathsf C _{1,w}}
\def\CCw{\mathsf C _1 ^\mu}
\DeclareMathOperator*{\esssup}{ess\,sup}
\def\ip#1,#2,{\langle #1,#2\rangle}
\def\Ip#1,#2,{\bigl\langle#1,#2\bigr\rangle}
\def\IP#1,#2,{\Bigl\langle#1,#2\Bigr\rangle}
\def\abs#1{\lvert#1\rvert}
\begin{document}
\raggedbottom
\title[Weighted Solyanik estimates]{Weighted Solyanik estimates for the strong maximal function}

\author{Paul Hagelstein}
\address{Department of Mathematics, Baylor University, Waco, Texas 76798}
\email{\href{mailto:paul_hagelstein@baylor.edu}{paul\!\hspace{.018in}\_\,hagelstein@baylor.edu}}
\thanks{P. H. is partially supported by a grant from the Simons Foundation (\#208831 to Paul Hagelstein).}

\author{Ioannis Parissis}
\address{Department of Mathematics, Aalto University, P. O. Box 11100, FI-00076 Aalto, Finland}
\email{\href{mailto:ioannis.parissis@gmail.com}{ioannis.parissis@gmail.com}}
\thanks{I. P. is supported by the Academy of Finland, project 277008.}

\subjclass[2010]{Primary 42B25, Secondary: 42B35}
\keywords{Halo function, Muckenhoupt weights, doubling measure, maximal function, Tauberian conditions}

\begin{abstract} Let $\Ms$ denote the strong maximal operator on $\R^n$ and let $w$ be a non-negative, locally integrable function. For $\alpha\in(0,1)$ we define the  weighted sharp Tauberian constant $\Cs$ associated with $\Ms$ by
\[
 \Cs(\alpha)\coloneqq \sup_{\substack {E\subset \R^n \\ 0<w(E)<+\infty}}\frac{1}{w(E)}w(\{x\in\R^n:\, \Ms(\ind_E)(x)>\alpha\}).
\]
We show that $\lim_{\alpha\to 1^-} \Cs(\alpha)=1$  if and only if $w\in A_\infty ^*$, that is if and only if $w$ is a \emph{strong Muckenhoupt weight}. This is quantified by the estimate $\Cs(\alpha)-1\lesssim_{n} (1-\alpha)^{ (cn [w]_{A_\infty ^*})^{-1}}$  as $\alpha\to 1^-$, where $c>0$ is a numerical constant; this estimate is sharp in the sense that the exponent $1/(cn[w]_{A_\infty ^*})$ can not be improved in terms of $[w]_{A_\infty ^*}$.  As corollaries, we obtain a sharp reverse H\"older inequality for strong Muckenhoupt weights in $\R^n$ as well as a quantitative imbedding of $A_\infty^*$ into $A_{p}^*$.  We also consider the strong maximal operator on $\R^n$ associated with the weight $w$ and denoted by $\Mw$. In this case the corresponding sharp Tauberian constant $\Cw$ is defined by
\[
 \Cw(\alpha)  \coloneqq \sup_{\substack {E\subset \R^n \\ 0<w(E)<+\infty}}\frac{1}{w(E)}w(\{x\in\R^n:\, \Mw(\ind_E)(x)>\alpha\}).
\]
We show that there exists some constant $c_{w,n}>0$ depending only on $w$ and the dimension $n$ such that $\Cw(\alpha)-1 \lesssim_{w,n} (1-\alpha)^{ c_{w,n} }$ as $\alpha\to 1^-$  whenever $w\in A_\infty ^*$ is a strong Muckenhoupt weight.
\end{abstract}
\maketitle

\section{Introduction} \label{s.intro} We are interested in asymptotic estimates for the distribution functions of maximal functions and allied issues. We work in the multiparameter setting so that our main operator is the \emph{strong maximal operator}
\[
\Ms f(x)\coloneqq \sup_{x\in R}\frac{1}{|R|}\int_{R} |f(y)|dy,\quad x\in \R^n,
\]
where the supremum is taken over all rectangular parallelepipeds $R\subseteq \R^n$ with sides parallel to the coordinate axes. This operator is in many senses a prototype for multiparameter harmonic analysis  as it is a geometric maximal operator that commutes with the full $n$-parameter group of dilations $(x_1, x_2, \ldots, x_n) \rightarrow (\delta_1x_1, \delta_2 x_2, \ldots, \delta_n x_n)$.   Unlike the Hardy-Littlewood maximal operator, the strong maximal operator is not of weak type $(1,1)$. It does however satisfy a weak distributional estimate of the form
\[
|\{x\in\R^n:\, \Ms f(x)>\lambda\}|\lesssim_n \int_{\R^n} \frac{|f(x)|}{\lambda}\Big(1+\big(\log^+\frac{|f(x)|}{\lambda}\big)^{n-1}\Big)dx,\quad \lambda>0;
\]
here $\log^+ t\coloneqq \max( \log t,0)$. This endpoint distributional inequality essentially goes back to Jessen, Marcinkiewicz, and Zygmund, \cite{JMZ}, and it allows us to show that the collection of all rectangles in $\R^n$ with sides parallel to the coordinate axes differentiates functions that are locally in $L(\log L)^{n-1} (\R ^n)$. See also \cite{CF} for a geometric proof of the same result.

In this paper we take up the study of weighted analogues of Solyanik estimates for the \emph{sharp Tauberian constants} associated with the basis of axes parallel rectangles.    Recall that, in the unweighted case, the sharp Tauberian constant associated with $\Ms$ is defined by
\[
 \mathsf C_{\mathsf S}(\alpha)\coloneqq \sup_{\substack {E\subset \R^n \\ 0<|E|<+\infty}}\frac{1}{|E|}\abs{\{x\in\R^n:\, \Ms(\ind_E)(x)>\alpha\}},\quad \alpha\in(0,1).
\]
Solyanik showed in \cite{solyanik} that $\mathsf C_{\mathsf S}(\alpha)-1\eqsim_n (1-\alpha)^\frac{1}{n}$ as $\alpha\to 1^-$ and thus we refer to such an asymptotic estimate as a \emph{Solyanik estimate}. Solyanik also showed in \cite{solyanik} an identical estimate for the Hardy-Littlewood maximal  operator defined with respect to cubes with sides parallel to the coordinate axes while in \cite{hp1} a similar estimate is proved for the Hardy-Littlewood maximal operator defined with respect to Euclidean balls.

We recall here that for $\alpha\in(1,\infty)$ the function $\phi_{\mathsf S}(\alpha)\coloneqq \mathsf{C}_{\mathsf{S}}(1/\alpha)$ is the so-called \emph{halo function} of the basis of rectangular parallelepipeds in $\R^n$ with sides parallel to the coordinate axes; by convention we define $\phi_{\mathsf S}(\alpha)\coloneqq \alpha$ for $\alpha\in[0,1]$. More generally, given any collection $\mathcal B$ consisting of bounded open sets in $\R^n$ one can define the halo function $\phi_{\mathcal B}$ with respect to the geometric maximal operator $\mathsf M_{\mathcal{B}}$ defined by
\[
 \mathsf M_{\mathcal B} f(x)\coloneqq \sup_{x\in B\in\mathcal B} \frac{1}{|B|}\int_B |f(y)|dy,\quad x\in\bigcup_{B\in\mathcal B} B,
\]
and $\mathsf M_{\mathcal B} f(x)\coloneqq 0$ otherwise. This definition of $\phi_\mathcal B$ is related to the halo conjecture which claims that the \emph{differentiation basis} $\mathcal B$ should differentiate functions $f$ for which $\phi_{\mathcal B}(f) \in L^1 _{\mathsf {loc}}$; see for example \cites{Guzdif} for an extensive discussion related to the halo problem. Some partial results towards this direction are contained in \cites{Guzdif,hs, Hayes, sjolinsoria ,soria}. Our original goal when studying the sharp Tauberian constants of differentiation bases was to enrich the limited information we have for the corresponding halo functions and, in particular, to provide some continuity and regularity estimates.

The endpoint continuity question as $\alpha\to 1^-$ seems however to relate to a variety of different questions in analysis.   For example, we will see in the current paper that Solyanik estimates also find very concrete applications in the theory of weighted norm inequalities. Indeed, the most important example is Theorem~\ref{t.weightedsol} which shows that \emph{weighted Solyanik estimates} give an alternative characterization of the class of multiparameter Muckenhoupt weights $A_\infty ^*$. In a similar note, one can show quantitatively sharp reverse H\"older inequalities for $A_\infty ^*$ weights assuming some weighted Solyanik estimate and quantitative embeddings of the class of multiparameter Muckenhoupt weights $A_\infty ^*$ into $A_p ^*$. On the other hand, Solyanik estimates, in the unweighted or weighted setting, are intimately related to covering properties of the collections of sets used to define $\mathsf M_{\mathcal B}$, and thus also $\mathsf C_{\mathcal B}$. This is especially relevant when one wants to quantify covering arguments of C\'ordoba-Fefferman type, as in \cite{CF}. See \S\ref{s.apps} for a detailed discussion of these applications of weighted Solyanik estimates.

Recently, Michael Lacey brought to our attention that Solyanik estimates have been implicitly used in a number of papers in multiparameter harmonic analysis; for example, in  \cite{CLMP}, Solyanik estimates for the basis of rectangles are used in order to provide versions of Journ\'e's lemma with \emph{small enlargement}. Furthermore, in \cites{laceyferg2002,laceyterwilleger}, Solyanik estimates play a role in results providing a characterization of the product \textsf{BMO} space of Chang and Fefferman, in terms of commutators. See also \cite{DaPe} for more general results of this type . From recent developments it has become apparent that Solyanik estimates and weighted Solyanik estimates will have a role to play, especially towards the direction of providing quantitative covering arguments in the multiparameter setting, where the one parameter covering arguments of Vitali or Besicovitch type fail. 

Very relevant to the theme of this paper are the \emph{weighted Solyanik estimates} and the \emph{Solyanik estimates with respect to weights}, studied in \cite{hp2} for the case of one-parameter operators. The main purpose of this paper is to prove Solyanik estimates under the presence of weights for the strong maximal operator. In order to explain the terminology, a weighted Solyanik estimate vaguely corresponds to the bound $\Ms:L^p(w)\to L^p(w)$ where the Lebesgue measure in the ambient space is replaced by $w$ but the maximal operator is still defined with respect to the Lebesgue measure. On the other hand, a Solyanik estimate with respect to a weight corresponds to a bound $\Mw:L^p(w)\to L^p(w)$ where the Lebesgue measure is replaced by $w$ both in the ambient space as well as in the definition of the maximal operator.

In this paper we shall see that Solyanik estimates also find very concrete applications in the theory of weighted norm inequalities. In particular we discuss in \S\ref{s.apps} a series of corollaries of weighted multiparameter Solyanik estimates that exhibit an intimate connection to reverse H\"older inequalities, weighted covering lemmas for rectangles in $\R^n$, as well as quantitative embeddings of the class of multiparameter Muckenhoupt weights $A_\infty ^*$ into $A_p ^*$.

\subsection*{Weighted multiparameter Solyanik estimates}In the study of Solyanik estimates in \cite{hp2} the class of Muckenhoupt weights $A_\infty$ comes up naturally as a certain weighted Solyanik estimate for the Hardy-Littlewood maximal operator is shown to actually characterize the class $A_\infty$. It is thus no surprise that the class of strong Muckenhoupt weights $A_\infty ^*$ is central in the current paper. Our approach heavily depends on one-dimensional notions  so we immediately recall the definition of $A_p$ weights on the real line.
\begin{definition}\label{d.ap} We say that a non-negative, locally integrable function $w$ in $\R$, that is, a \emph{weight}, belongs to the \emph{Muckenhoupt class} $A_p$ on the real line, $1<p<+\infty$, if
\[
	[w]_{A_p }\coloneqq \sup_{I}\bigg(	\frac{1}{|I|}\int w(y)dy\bigg)\bigg(\frac{1}{|I|}\int_B w(y)^{-\frac{1}{p-1}}dy\bigg)^{p-1}<+\infty.
\]
where the supremum is taken over all bounded intervals $I\subseteq \R$. The class $A_1$ is defined to be the set of weights $w$ on the real line such that
\[
[w]_{A_1 }\coloneqq	\sup_{I}\bigg(\frac{1}{|I|}\int_I w(y)dy \bigg) \esssup_{I} (w^{-1})<+\infty\;.
\]
Also, we define the class $A_\infty$ to be the set of weights $w$ such that
\[
 [w]_{A_\infty}\coloneqq \sup_I \frac{1}{w(I)}\int_I \MM(w\ind_I)<+\infty.
\]
\end{definition}
Some remarks are in order. Firstly, the class $A_\infty$ can be also described as $A_\infty= \cup_{p>1} A_p $, while many equivalent definitions exist in the literature; see \cite{DMRO}. Definition~\ref{d.ap} for $p=\infty$ goes back to Fujii \cite{Fu}, and Wilson, \cites{W1,W2}. Recently several papers used the Fujii-Wilson constant above in order to provide sharp quantitative weighted bounds for maximal functions and singular integrals; see for example \cites{HytP,HytPR,LM}. We also recall that the class of Muckenhoupt weights $A_p $ characterizes the boundedness property $\MM:L^p(\R,w)\to L^p(\R,w)$ for $p\in(1,\infty)$ where $\MM$ denotes the non-centered Hardy-Littlewood maximal operator on $\R$.

These definitions extend in higher dimensions in different ways. If we replace intervals by cubes in $\R^n$ with sides parallel to the coordinate axes we get the one-parameter Muckenhoupt classes in $\R^n$ which are still denoted by $A_p$. The classes $A_p$ characterize the boundedness of the $n$-dimensional Hardy-Littlewood maximal operator on $L^p(\R^n,w)$. However, if we replace the intervals in Definition~\ref{d.ap} by rectangular parallelepipeds in $\R^n$ with sides parallel to the coordinate axes, the resulting classes define the \emph{strong} or \emph{multiparameter Muckenhoupt weights}, denoted by $A_p ^*$. The class of strong Muckenhoupt weights characterizes the boundedness property $\Ms:L^p(\R^n,w)\to L^p(\R^n,w)$ for $p\in(1,\infty)$ and thus is very relevant to the content of this paper. See for example \cite{GaRu} for a more detailed discussion on these issues.

Here we adopt a one-dimensional point of view on strong Muckenhoupt weights and their corresponding constants. For $x=(x_1,\ldots,x_n)\in \R^n$ let us define the $(n-1)$-dimensional vector $\bar x^j\coloneqq(x_1,\ldots,x_{j-1},x_{j+1},\ldots x_n)\in\R^{n-1}$. We then consider the one-dimensional weight
\[
 w_{\bar x ^j}(t)\coloneqq w(x_1,\ldots,x_{j-1},t,x_{j+1},\ldots x_n),\quad t\in\R.
\]
It is well known that $w\in A_p ^*$ if and only if $w_{\bar x^j} \in A_p$ on the real line, uniformly for a.e. $\bar x^j\in\R^{n-1}$; see \cite{GaRu} or \cite{BaKu}*{Lemma 1.2}. This motivates the following definition.
\begin{definition}\label{d.ainfty} Let $w\in A_p ^*$ be a strong Muckenhoupt weight in $\R^n$ and let $1\leq p \leq \infty$. We define
\[
[w]_{A_p ^*} \coloneqq \sup_{1\leq j\leq n} \esssup_{\bar x^j \in \R^{n-1}} [w_{\bar x ^j}]_{A_p  }.
\]
\end{definition}
The discussion above is then translated to the statement that for $p\in[1,\infty]$ we have that $w\in A_p ^*\Leftrightarrow [w]_{A_p ^*}<+\infty$. We will overview the basic properties of strong Muckenhoupt weights in more detail in \S\ref{s.weights}.

Under the presence of a weight in the ambient space, the natural definition for the sharp Tauberian constant becomes
\[
 \Cs(\alpha)\coloneqq \sup_{\substack {E\subset \R^n \\ 0<w(E)<+\infty}}\frac{1}{w(E)}w(\{x\in\R^n:\, \Ms(\ind_E)(x)>\alpha\}).
\]
Our first main theorem gives a new characterization of the class $A_\infty ^*$ in terms of weighted Solyanik estimates for $\Ms$.
\begin{theorem}\label{t.weightedsol} Let $w$ be a non-negative, locally integrable function in $\R^n$. If $w\in A_\infty ^*$ we have
	\[
	\Cs(\alpha)-1\lesssim_n (1-\alpha)^{(c n[w]_{A_\infty ^*})^{-1}}\quad \text{for all}\quad 1>\alpha>1-e^{-cn[w]_{A_\infty ^*}},
	\]
where $c>0$ is a numerical constant. Furthermore this estimate is sharp in the following sense: if there exist $B,\beta>1$ and $\gamma>0$ such that $\Cs(\alpha)-1\leq B(1-\alpha)^\frac{1}{\beta}$ for all $1 > \alpha>1-e^{-\gamma}$ then $w\in A_\infty ^*$ and $[w]_{A_\infty ^*} \lesssim \beta(1+\max(\gamma/\beta,\ln B))$.
\end{theorem}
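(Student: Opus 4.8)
\emph{Strategy.} The plan is to reduce everything to the one-parameter weighted theory on the one-dimensional slices $w_{\bar x^j}$ of $w$, and then to quote the one-dimensional weighted Solyanik estimate and its sharp converse established in \cite{hp2} (i.e.\ the $n=1$ instance of the present statement). Throughout, let $M$ be the one-dimensional uncentred Hardy--Littlewood maximal operator on $\R$ and, for $1\le j\le n$, let $M_j$ be the operator on $\R^n$ obtained by letting $M$ act in the $j$-th variable, so that the restriction of $M_j\ind_E$ to a line parallel to the $j$-th coordinate axis is the one-dimensional maximal function of the corresponding slice of $E$. There is no circularity here: in one variable the geometric step below degenerates to an identity.

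\emph{The direct estimate.} Assume $w\in A_\infty ^*$ and set $\delta\coloneqq 1-\alpha$. First I would prove, by induction on the number of coordinates, the purely geometric inclusion
\[
 \{\Ms\ind_E>\alpha\}\ \subseteq\ A_1,\qquad A_j\coloneqq\bigl\{M_j\ind_{A_{j+1}}>1-\delta^{1/n}\bigr\}\ \ (1\le j\le n),\quad A_{n+1}\coloneqq E,
\]
valid for every measurable $E$. If $\Ms\ind_E(x)>\alpha$, pick an axis-parallel rectangle $R=R'\times I_n\ni x$ with $|R\cap E|>\alpha|R|$ and apply Chebyshev's inequality to the defect density $y\mapsto|I_n\setminus E_y|/|I_n|$ on $R'$, cut off at a level $s$: this produces $G\subseteq R'$ with $|G|<(\delta/s)|R'|$ such that $M_n\ind_E(y,x_n)>1-s$ for every $y\in R'\setminus G$, so the first $n-1$ coordinates of $x$ lie in the super-level set at height $1-\delta/s$ of the $(n-1)$-parameter strong maximal operator applied, on the slice through $x_n$, to $\{M_n\ind_E>1-s\}$. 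Choosing $s=\delta^{1/n}$ — the unique value with $(\delta/s)^{1/(n-1)}=s$ — and invoking the inductive hypothesis in dimension $n-1$ with parameter $\delta/s$ then yields exactly the claimed nested structure, all levels being equal to $1-\delta^{1/n}$; reassembling the slices is legitimate since $M_j$ commutes with slicing in the $n$-th variable for $j\le n-1$. Next I would slice $w(A_j)$ in the $j$-th variable: Fubini's theorem and the one-dimensional weighted Tauberian bound on each slice give $w(A_j)\le\bigl(\esssup_{\bar x^j}\mathsf C_{1,w_{\bar x^j}}(1-\delta^{1/n})\bigr)w(A_{j+1})$ for $1\le j\le n$, and the one-dimensional Solyanik estimate, combined with $[w_{\bar x^j}]_{A_\infty}\le[w]_{A_\infty ^*}$ and $0<\delta^{1/n}<1$, bounds each essential supremum by $1+C\bigl(\delta^{1/n}\bigr)^{1/(c[w]_{A_\infty ^*})}$ provided $\delta^{1/n}<e^{-c[w]_{A_\infty ^*}}$, i.e.\ $\alpha>1-e^{-cn[w]_{A_\infty ^*}}$. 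Multiplying these $n$ inequalities and using $(1+t)^n\le1+(2^n-1)t$ for $t\in[0,1]$ — after shrinking $\delta$ slightly, at the cost of the numerical constant, so that $C(\delta^{1/n})^{1/(c[w]_{A_\infty ^*})}\le1$ — would give $\Cs(\alpha)-1\lesssim_n\delta^{1/(cn[w]_{A_\infty ^*})}$, as asserted.

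\emph{The converse.} Now suppose $\Cs(\alpha)-1\le B(1-\alpha)^{1/\beta}$ for all $\alpha\in(1-e^{-\gamma},1)$. I would fix a direction, say $j=n$, a bounded set $E_0\subseteq\R$ with super-level set $F_0\coloneqq\{t\in\R:M\ind_{E_0}(t)>\alpha\}$, and a small cube $Q\subseteq\R^{n-1}$ centred at a point $\bar x^n$. Testing the definition of $\Cs(\alpha)$ on $E\coloneqq Q\times E_0$ and using only rectangles that are a subcube of $Q$ times an interval, one checks $\{\Ms\ind_E>\alpha\}\supseteq\mathrm{int}(Q)\times F_0$, hence, writing $w_{\bar y}(t)\coloneqq w(\bar y,t)$,
\[
 \Cs(\alpha)\ \geq\ \frac{\int_Q w_{\bar y}(F_0)\,d\bar y}{\int_Q w_{\bar y}(E_0)\,d\bar y}.
\]
Since $E_0$ and $F_0$ are bounded, both integrands lie in $L^1_{\mathrm{loc}}(\R^{n-1})$; letting $Q$ contract to $\bar x^n$ and applying the Lebesgue differentiation theorem would give $w_{\bar x^n}(F_0)\le\Cs(\alpha)\,w_{\bar x^n}(E_0)$ for a.e.\ $\bar x^n$, and a routine density reduction (it is enough to let $E_0$ range over a countable family in order to compute $\mathsf C_{1,w_{\bar x^n}}$) would upgrade this to $\mathsf C_{1,w_{\bar x^n}}(\alpha)\le\Cs(\alpha)$ for a.e.\ $\bar x^n$. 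Thus $\mathsf C_{1,w_{\bar x^j}}(\alpha)-1\le B(1-\alpha)^{1/\beta}$ on $(1-e^{-\gamma},1)$ for a.e.\ $\bar x^j$ and every $j$, and the sharp one-dimensional converse of \cite{hp2} forces $[w_{\bar x^j}]_{A_\infty}\lesssim\beta\bigl(1+\max(\gamma/\beta,\ln B)\bigr)$; taking the supremum over $j$ and the essential supremum over $\bar x^j$ then shows $w\in A_\infty ^*$ with $[w]_{A_\infty ^*}\lesssim\beta\bigl(1+\max(\gamma/\beta,\ln B)\bigr)$.

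\emph{Where the difficulty lies.} I expect the one genuinely delicate point to be the geometric reduction in the direct estimate — in particular, recognizing that the single cut-off level $s=\delta^{1/n}$ is exactly the choice for which every level appearing in the iterate collapses to $1-\delta^{1/n}$, so that the $n$ successive applications of the one-dimensional estimate together cost only the power $\delta^{1/(cn[w]_{A_\infty ^*})}$ and nothing worse; this balancing is what forces the sharp exponent. The inductive bookkeeping (keeping the strict inequalities straight, and checking that reassembling the $n$-th-variable slices produces genuine super-level sets of $M_j$) is routine, and everything downstream is an exercise with Fubini's theorem and with the one-parameter weighted theory of \cite{hp2}.
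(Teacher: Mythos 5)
Your proof is correct in its essentials and gives the same sharp exponent, but the direct estimate follows a genuinely different route from the paper's. The paper proves an iterative Lemma~\ref{l.iter} for compositions $\mathsf M_{\beta_1}\cdots\mathsf M_{\beta_N}$ by dominating, at each step, $\mathsf M_B\ind_E$ by $\mathsf M_1(\ind_{E_j}+\alpha_j\ind_{E_j^{\mathsf c}})$ and then applying the one-dimensional weighted estimate of Lemma~\ref{l.weightsol1d} with $\gamma=\alpha_j\neq 0$; the weight therefore participates at every stage. You instead isolate a purely geometric, weight-free inclusion $\{\Ms\ind_E>\alpha\}\subseteq A_1$ into nested super-level sets $A_j=\{\mathsf M_j\ind_{A_{j+1}}>1-\delta^{1/n}\}$ via Chebyshev on the defect density and the balancing cut-off $s=\delta^{1/n}$, and afterwards invoke only the $\gamma=0$ instance of the one-dimensional Tauberian bound on the slices. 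Your version separates the multiparameter geometry from the one-parameter weight theory; the paper's mixed-indicator iteration trades that conceptual cleanliness for avoiding the bookkeeping of slicing and reassembling the nested $A_j$ (which you do handle correctly, since $\mathsf M_j$ commutes with restriction to $\{x_n=\text{const}\}$ for $j<n$). Both yield $\Cs(\alpha)\le\bigl(1-4(1-\alpha)^{(4n[w]_{A_\infty^*})^{-1}}\bigr)^{-n}$.

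For the converse, the paper (Lemma~\ref{l.fromsol}) derives the pointwise size condition $w(E)/w(R)\lesssim(|E|/|R|)^{1/\beta}$ on rectangles directly from the Solyanik estimate, upgrades it to a rectangular reverse H\"older inequality, and only then slices; this is clean because the reverse H\"older inequality concerns just two locally integrable functions $w,w^r$ and a countable dense family of rectangles suffices for the differentiation step. You instead slice the Tauberian constant itself via the tensor tests $Q\times E_0$. The ``routine density reduction'' you invoke is not automatic: $E_0\mapsto\{\MM\ind_{E_0}>\alpha\}$ has no useful continuity. It does work, but one should record why: $\mathsf C_{1,\nu}(\alpha)$ may be computed by testing on open $E_0$, these are monotone limits of finite unions of rational intervals, and $\MM\ind_{E_0^{(k)}}\uparrow\MM\ind_{E_0}$ forces $\{\MM\ind_{E_0^{(k)}}>\alpha\}\uparrow\{\MM\ind_{E_0}>\alpha\}$, so a single Lebesgue-null exceptional set of $\bar x^j$ handles all $E_0$ simultaneously. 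Once that is said, your converse produces the same bound $[w]_{A_\infty^*}\lesssim\beta(1+\max(\gamma/\beta,\ln B))$ by the same one-dimensional input as the paper's Lemma~\ref{l.rhi}(ii).
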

It is well known that $A_\infty ^*$ weights satisfy reverse H\"older inequalities. Sharp quantitative versions of these inequalities are contained in several places in the literature as for example in \cites{HytP,HytPR} and \cite{Kin} for the one-parameter case, and in \cites{kint,Lu} for the multiparameter case. In one dimension even more precise results are known which also describe the optimal numerical constants involved in the estimates; see for example \cite{DW} and \cite{V}. As a corollary of Theorem~\ref{t.weightedsol} we obtain a reverse H\"older inequality for strong Muckenhoupt weights.

\begin{theorem}\label{t.rhi} Let $w\in A_\infty ^*$ be a strong Muckenhoupt weight on $\R^n$ and define $[w]_{A_\infty ^*}$ as above. There exists a numerical constant $c>0$ such that
\[
 \Big(\frac{1}{|R|}\int_R w^r \Big)^\frac{1}{r} \lesssim_n \frac{1}{(1-(r-1)(cn[w]_{A_\infty ^*} -1))^{\frac{1}{r}}}\frac{1}{|R|}\int_R w
\]
for all $r<1+\frac{1}{cn[w]_{A_\infty ^*}-1}$. Furthermore, the exponent in the reverse H\"older inequality is optimal up to dimensional constants: if a weight $w$ satisfies
\[
 \Big(\frac{1}{|R|}\int_R w^r \Big)^\frac{1}{r} \leq B \frac{1}{|R|}\int_R w
\]
for all rectangular parallelepipeds $R$ then $w\in A_\infty ^*$ and $[w]_{A_\infty ^*}\lesssim r' (1+\ln B).$
\end{theorem}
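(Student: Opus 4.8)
The plan is to obtain both halves of Theorem~\ref{t.rhi} from Theorem~\ref{t.weightedsol}. The forward implication rests on recasting the weighted Solyanik estimate as a density inequality. If $R$ is a rectangle and $F\subseteq R$ is measurable with $|F|\le(1-\alpha)|R|$, then setting $E\coloneqq R\setminus F$ every point $x\in R$ satisfies $\Ms(\ind_E)(x)\ge|E\cap R|/|R|\ge\alpha$, so $R\subseteq\{x:\Ms(\ind_E)(x)>\alpha'\}$ for all $\alpha'<\alpha$; hence $w(R)\le\Cs(\alpha')w(E)$ and therefore $w(F)\le(\Cs(\alpha')-1)w(R)$. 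Put $\epsilon\coloneqq(cn[w]_{A_\infty ^*})^{-1}$ and choose $\alpha'$ with $1-\alpha'$ comparable to $|F|/|R|$, which keeps us inside the range of validity of Theorem~\ref{t.weightedsol} as long as $|F|/|R|<\delta_0\coloneqq\tfrac12 e^{-1/\epsilon}$. One then reads off the uniform bound
\[
F\subseteq R,\quad \frac{|F|}{|R|}<\delta_0\ \Longrightarrow\ \frac{w(F)}{w(R)}\lesssim_n\Big(\frac{|F|}{|R|}\Big)^{\epsilon}.
\]

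With this in hand I would fix a rectangle $R$ and normalise $\frac1{|R|}\int_R w=1$ (both sides of the asserted inequality scale the same way under $w\mapsto cw$). Chebyshev's inequality gives $|\{x\in R:w(x)>\lambda\}|\le|R|/\lambda$, so the density bound yields $w(\{x\in R:w>\lambda\})\lesssim_n\lambda^{-\epsilon}|R|$ for $\lambda>\Lambda_0\coloneqq 1/\delta_0$, while trivially $w(\{x\in R:w>\lambda\})\le|R|$ for every $\lambda$. Using the layer-cake identity
\[
\frac1{|R|}\int_R w^r=\frac{r-1}{|R|}\int_0^\infty\lambda^{r-2}\,w\big(\{x\in R:w(x)>\lambda\}\big)\,d\lambda ,
\]
I split the integral at $\Lambda_0$: the part over $(0,\Lambda_0)$ is at most $\Lambda_0^{r-1}$, and the part over $(\Lambda_0,\infty)$ is at most a dimensional multiple of $(r-1)\Lambda_0^{r-1-\epsilon}/(\epsilon-(r-1))$, which is finite exactly when $r-1<\epsilon$. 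Since $\Lambda_0^{r-1}$ and $\Lambda_0^{r-1-\epsilon}$ stay bounded in that range (the smallness threshold $\delta_0$ and the exponent $\epsilon$ are linked), this gives $\frac1{|R|}\int_R w^r\lesssim_n(1-(r-1)/\epsilon)^{-1}$; taking $r$-th roots and undoing the normalisation produces the reverse H\"older inequality, the precise shape of the denominator being a matter of bookkeeping the constants across the range of validity.

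For the sharpness I would pass to one dimension. Restricting the assumed reverse H\"older inequality to rectangles that are thin in every direction except the $j$-th and letting those side-lengths tend to $0$, Lebesgue differentiation in the transverse variables shows that a.e.\ one-dimensional slice $w_{\bar x^j}$ satisfies the same reverse H\"older inequality, with the same $r$ and $B$. On the real line H\"older's inequality turns this into the density estimate $w(F)/w(I)\le B(|F|/|I|)^{1/r'}$ valid for all intervals $I$ and all $F\subseteq I$, and the component (``rising sun'') structure of the superlevel sets of the one-dimensional non-centered maximal operator together with a Vitali covering converts that into a weighted Solyanik estimate of the shape $\CC(\alpha)-1\lesssim B(1-\alpha)^{1/r'}$, valid for $\alpha>1-(2B)^{-r'}$, just as in \cite{solyanik} and \cite{hp2}. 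Invoking the sharpness clause of Theorem~\ref{t.weightedsol} with $n=1$ and $\beta=r'$ then gives $[w_{\bar x^j}]_{A_\infty ^*}=[w_{\bar x^j}]_{A_\infty}\lesssim r'(1+\ln B)$ for a.e.\ $\bar x^j$, and taking the supremum over $j$ and the essential supremum over $\bar x^j$ yields $w\in A_\infty ^*$ with $[w]_{A_\infty ^*}\lesssim r'(1+\ln B)$.

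I expect the one genuinely delicate step to be the passage from the density estimate back to a Solyanik estimate in the sharpness half. In one dimension this is a routine Vitali covering, which is precisely why I route the argument through the slices of $w$; a direct argument in $\R^n$ would require a quantitative C\'ordoba--Fefferman covering for the strong maximal operator in the regime $\alpha\to1^-$, i.e.\ exactly the sort of multiparameter covering that is unavailable from Vitali or Besicovitch. The forward direction, by contrast, uses Theorem~\ref{t.weightedsol} purely as a black box and then nothing beyond Chebyshev's inequality and the layer-cake formula.
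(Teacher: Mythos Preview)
Your forward direction is exactly the paper's argument: the paper packages the passage Solyanik estimate $\Rightarrow$ density bound $w(F)/w(R)\lesssim_n(|F|/|R|)^{1/\beta}$ $\Rightarrow$ reverse H\"older via the layer-cake formula as Lemma~\ref{l.fromsol}~(i)--(ii), and your write-up reproduces that proof.

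For the sharpness half you also pass to one-dimensional slices and use H\"older to obtain the density estimate $w_{\bar x^j}(F)/w_{\bar x^j}(I)\le B(|F|/|I|)^{1/r'}$, which is again what the paper does. The one difference is your final step: you convert the density estimate back into a one-dimensional Solyanik estimate via a Vitali covering and then quote the sharpness clause of Theorem~\ref{t.weightedsol}. This is correct but circuitous, since that sharpness clause is itself proved (via Lemma~\ref{l.fromsol}~(iii)) by going Solyanik $\Rightarrow$ density $\Rightarrow$ $A_\infty$; so you are effectively doing density $\Rightarrow$ Solyanik $\Rightarrow$ density $\Rightarrow$ $A_\infty$. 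The paper instead goes directly from the one-dimensional density estimate to the bound $[w_{\bar x^j}]_{A_\infty}\lesssim r'(1+\ln B)$ using the Fujii--Wilson definition and the $L^p$ bound for $\MM$ (Lemma~\ref{l.rhi}~(ii)), skipping the Solyanik detour entirely. Your route works and yields the same constant, but the shortcut is worth knowing: once you have the density inequality in one dimension there is no need to re-enter the Solyanik machinery.
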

It is of some importance to note that the reverse H\"older inequality above holds with an exponent defined with respect to the $A_\infty ^*$-constant from Definition~\ref{d.ainfty}, which is essentially one-dimensional. This results to a wider range for the exponent in the reverse H\"older inequalities for multiparameter weights, compared to the ones that were known or implicit in the literature; indeed, these involve the $A_p ^\mathsf{rec}$-constants which are defined with respect to rectangles and are in general larger than the $A_p ^*$-constants we use here; see \S\ref{s.rhi}.
\subsection*{Multiparameter Solyanik estimates with respect to weights} A parallel investigation concerns the weighted strong maximal operator defined for a non-negative locally integrable function $w$ on $\R^n$ as
\[
\Mw f(x)\coloneqq \sup_{x\in R} \frac{1}{w(R)}\int_R |f(y)|w(y)dy,\quad x\in\R^n.
\]
Of course the same definition makes perfect sense for essentially any locally finite Borel measure $\mu$ in place of $w$. However, our understanding of multiparameter maximal operators defined with respect to measures is rather rudimentary and the case $d\mu(x)=w(x)dx$ for $w\in A_\infty ^*$ is one of the few examples where we have a more or less complete picture of the available bounds. For example it is known that if $w\in A_\infty ^*$ then $\Mw$ is bounded on $L^p(w)$ for $p\in(1,\infty)$; see for example \cite{F}. Surprisingly, the question whether this basic mapping property persists for the case of product doubling measures remains open. See however \cite{hlp} for a related discussion and a characterization of this property in terms of Tauberian conditions.

For a non-negative, locally integrable function $w$ on $\R^n$ we define the sharp Tauberian constant corresponding to $\Mw$ as
\[
 \Cw(\alpha)  \coloneqq \sup_{\substack {E\subset \R^n \\ 0<w(E)<+\infty}}\frac{1}{w(E)}w(\{x\in\R^n:\, \Mw(\ind_E)(x)>\alpha\}).
\]
The second main result of this paper is a Solyanik estimate for $\Cw$ in the case that $w\in A_\infty ^*$.
\begin{theorem}\label{t.solwrtw} Let $w\in A_\infty ^*$ be a strong Muckenhoupt weight. There exists a constant $c_{w,n}>0$ depending only upon $w$ and the dimension such that
	\[
	\Cw(\alpha)-1\lesssim_{w,n} (1-\alpha)^{c_{w,n}}
	\]
as $\alpha\to 1^-$.
\end{theorem}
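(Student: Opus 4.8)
The plan is to reduce the Solyanik estimate for $\Mw$ to the unweighted one-dimensional theory by exploiting the product structure of $w\in A_\infty^*$, in the same spirit in which the unweighted strong maximal estimate of Solyanik reduces to an iteration over coordinates. First I would recall that, since $w\in A_\infty^*$, each slice $w_{\bar x^j}$ is a genuine $A_\infty$ weight on $\R$ with $[w_{\bar x^j}]_{A_\infty}\le [w]_{A_\infty^*}$, and hence satisfies a one-dimensional reverse H\"older inequality and, crucially, a one-dimensional Solyanik estimate \emph{with respect to the weight} $w_{\bar x^j}$: the one-parameter results of \cite{hp2} give $\mathsf C^{w_{\bar x^j}}_{1}(\alpha)-1\lesssim (1-\alpha)^{c_{[w]_{A_\infty^*}}}$ where the constant depends only on $[w]_{A_\infty^*}$. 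This is the engine for the induction on the number of parameters.

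Next I would set up the iteration. Write $\Mw=\Mw^{(1)}\circ\cdots\circ\Mw^{(n)}$ in the usual way, where $\Mw^{(j)}$ is the one-dimensional weighted maximal operator acting in the $j$-th variable with the (renormalized) measure $w_{\bar x^j}(t)\,dt$; the product structure of $w$ is exactly what makes these one-dimensional averages compatible with the global weight $w$ by Fubini. Given a measurable $E$ with $0<w(E)<\infty$ and $\alpha$ close to $1$, pick auxiliary thresholds $\alpha=\alpha_0<\alpha_1<\cdots<\alpha_n<1$ (for instance $\alpha_j=1-(1-\alpha)^{(n-j)/n}$ or a geometric interpolation), and observe that
\[
\{\Mw(\ind_E)>\alpha_0\}\subseteq\{\Mw^{(1)}(\ind_{\{\Mw^{(2)}\cdots\Mw^{(n)}(\ind_E)>\alpha_1\}})>\alpha_0/\alpha_1\}\cup\{\cdots\},
\]
peeling off one variable at a time. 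Applying the one-dimensional Solyanik estimate in the $j$-th variable to the super-level set at stage $j$, integrating in the remaining variables with Fubini (legitimate precisely because $w(x)=\prod$-type behavior along lines is controlled, i.e. because each $w_{\bar x^j}\in A_\infty$ uniformly), and telescoping, one obtains
\[
w(\{\Mw(\ind_E)>\alpha\})\le\Big(\prod_{j=1}^{n}\mathsf C^{w_{\cdot}}_{1}(\alpha_{j-1}/\alpha_j)\Big)\,w(E)\le\big(1+C(1-\alpha)^{c_{w,n}/n}\big)^n w(E),
\]
which gives $\Cw(\alpha)-1\lesssim_{w,n}(1-\alpha)^{c_{w,n}}$ after adjusting the exponent.

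The main obstacle I anticipate is the Fubini step: unlike the Lebesgue case, one must verify that slicing the super-level sets and applying the one-dimensional estimate slice-by-slice is uniform in the transverse variable, and that the intermediate operators $\Mw^{(j)}$ genuinely dominate (or are dominated by) the relevant one-dimensional weighted maximal operators along lines — this uses that the $A_\infty^*$ condition is equivalent to uniform one-dimensional $A_\infty$ membership of all slices (\cite{GaRu}, \cite{BaKu}*{Lemma 1.2}). A secondary technical point is that the one-dimensional Solyanik constant entering at each stage has an exponent depending on $[w]_{A_\infty^*}$ rather than a universal dimensional one, so the final exponent $c_{w,n}$ legitimately depends on $w$; since the theorem only claims dependence on $w$ and $n$, this is acceptable, but one should be careful that the nested thresholds $\alpha_j$ are chosen so that each ratio $\alpha_{j-1}/\alpha_j$ still tends to $1$ at a controlled rate and lies in the admissible range of the one-dimensional estimate. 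Once these uniformities are in place, the telescoping and the elementary inequality $(1+t)^n-1\le 2^n t$ for small $t$ finish the argument.
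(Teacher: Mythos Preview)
Your iteration scheme has a genuine gap at the very first step: the domination $\Mw\le \Mw^{(1)}\circ\cdots\circ\Mw^{(n)}$ (which you phrase as ``Write $\Mw=\Mw^{(1)}\circ\cdots\circ\Mw^{(n)}$ in the usual way'') is not available for a general $w\in A_\infty^*$. In the Lebesgue case the inequality $\Ms\le \mathsf M_1\cdots\mathsf M_n$ works because the average over $R=I_1\times\cdots\times I_n$ factors as a product of one-dimensional averages. With a weight, Fubini gives
\[
\frac{1}{w(R)}\int_R f\,w
= \frac{\displaystyle\int_{I_1\times\cdots\times I_{n-1}} \Big(\tfrac{1}{w_{\bar y^n}(I_n)}\int_{I_n} f\,w_{\bar y^n}\Big)\,w_{\bar y^n}(I_n)\,d\bar y^n}
       {\displaystyle\int_{I_1\times\cdots\times I_{n-1}} w_{\bar y^n}(I_n)\,d\bar y^n},
\]
so after dominating the inner bracket by $\Mw^{(n)}f$ you are left with an $(n-1)$-dimensional average against the weight $\bar y^n\mapsto w_{\bar y^n}(I_n)$, which \emph{depends on $I_n$}. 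Taking the supremum over $I_n$ does not produce a fixed outer operator $\Mw^{(1)}\circ\cdots\circ\Mw^{(n-1)}$; the ``product structure'' you appeal to simply is not there unless $w$ itself is a tensor product. This is exactly the coupling of variables that the paper flags as the obstruction to an inductive proof, and your own caveat about the Fubini step is pointing at the fatal issue rather than a technicality.

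The paper sidesteps the problem entirely: since $w\in A_\infty^*$ one has $w\in A_{p_o}^*$ for some finite $p_o$, and the $A_{p_o}^{\mathsf{rec}}$ condition converts the weighted density $w(R\cap E)/w(R)>\alpha$ into a Lebesgue density $|R\cap E|/|R|>1-[w]_{A_{p_o}^{\mathsf{rec}}}^{1/p_o}(1-\alpha)^{1/p_o}$. Thus $\{\Mw\ind_E>\alpha\}\subseteq\{\Ms\ind_E>1-C_w(1-\alpha)^{1/p_o}\}$, and one simply invokes the already-established weighted Solyanik estimate for $\Ms$ (Theorem~\ref{t.weightedsol}). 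No iteration over coordinates with respect to $w$ is needed.
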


\section*{Notation} We use the letters $C,c>0$ to denote numerical constants that can change even in the same line of text. A dependence of some constant $c$ on some parameter $\tau$ is indicated by writing $c_\tau$. We write $A\lesssim B$ whenever $A\leq cB$ and $A\eqsim B$ whenever $A\lesssim B$ and $B\lesssim A$. We denote dependencies on parameters by writing, for example, $A\lesssim_\tau B$. A \emph{weight} $w$ is a non-negative locally integrable function and we use the notation $w(E)\coloneqq \int_E w(x)dx$ for measurable sets $E\subseteq \R^n$. Finally we use the letters $R,S$ to denote rectangular parallelepipeds in $\R^n$, which we will frequently colloquially refer to as \emph{rectangles}, whose sides are parallel to the coordinate axes. In the one-dimensional case,  bounded subintervals of the real line are denoted by $I$.

\section{Preliminaries; some properties of \texorpdfstring{$A_\infty ^*$}{Ainfty} weights} \label{s.weights}
The literature concerning one-parameter Muckenhoupt weights is extremely rich and refined, providing very sharp estimates and alternative proofs for all the  properties of interest. In the multiparameter case the literature is quite limited. In many cases, the properties of one-parameter weights extend without difficulty to the multiparameter case. See for example \cite{GaRu} where most of these classical properties of strong Muckenhoupt weights are described. Some attention should be given however when transferring properties from the one-parameter case to the multiparameter case, especially when the endpoint bounds for the corresponding maximal operators are involved. In this section we gather the properties of strong Muckenhoupt weights that we need in the rest of the paper and briefly review their proofs in the multiparameter case.

Concerning the gauges used for strong Muckenhoupt weights, it is quite common in the literature to define $A_p ^*$-constants with respect to rectangles by
\[
 [w]_{A_p ^\mathsf{rec}}\coloneqq \sup_R \Big(\frac{1}{|R|}\int_R w\Big) \Big(\frac{1}{|R|}\int_R w^{-\frac{1}{p-1}}\Big)^{p-1}
\]
with the supremum taken over all rectangular parallelepipeds in $\R^n$ with sides parallel to the coordinate axes. In fact, there are not so many, if any, quantitative weighted bounds for multiparameter weights in the literature; see however \cite{Lu} and the references therein.  The following simple lemma gives the equivalence of the definition of $A_p ^*$ in terms of the constants $[w]_{A_p ^*}$ and $[w]_{A_p ^\mathsf{rec}}$. Note however the qualitative flavor of the statement of the lemma in one of the two directions.
\begin{lemma}\label{l.dir} Let $w$ be a non-negative, locally integrable function in $\R^n$ and let $1\leq p \leq	 \infty$. Then $w\in A_p ^*$ if and only if $[w]_{A_p ^*}<+\infty$ if and only if $[w]_{A_p ^\mathsf{rec}}<+\infty$. Furthermore, for all $p\in[1,\infty)$ we have $[w]_{A_p ^*}\leq [w]_{A_p ^\mathsf{rec}}$.
\end{lemma}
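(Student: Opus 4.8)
The plan is to prove the three-way equivalence by first establishing the quantitative inequality $[w]_{A_p^*}\leq[w]_{A_p^{\mathsf{rec}}}$ for $p\in[1,\infty)$ (which immediately gives the implication $[w]_{A_p^{\mathsf{rec}}}<\infty\Rightarrow[w]_{A_p^*}<\infty$), then the reverse qualitative implication, and finally the case $p=\infty$ separately since the $A_\infty$ gauge is defined differently (via the Fujii--Wilson expression rather than a product of averages). Throughout I will use the characterization recalled in the excerpt: $w\in A_p^*$ iff $w_{\bar x^j}\in A_p$ on $\R$ uniformly in $\bar x^j$ and in the coordinate direction $j$.

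First I would prove $[w]_{A_p^*}\leq[w]_{A_p^{\mathsf{rec}}}$ for $1\leq p<\infty$. Fix a direction $j$, a point $\bar x^j\in\R^{n-1}$, and a bounded interval $I\subseteq\R$; I want to bound the $A_p$ quantity of $w_{\bar x^j}$ on $I$ by $[w]_{A_p^{\mathsf{rec}}}$. The idea is to approximate: consider the thin rectangle $R_\delta = I\times Q_\delta$ where $Q_\delta$ is a small $(n-1)$-dimensional cube of sidelength $\delta$ centered at $\bar x^j$ (suitably placed so the $j$-th slot corresponds to $I$). By Fubini, $\frac{1}{|R_\delta|}\int_{R_\delta}w = \frac{1}{|Q_\delta|}\int_{Q_\delta}\big(\frac1{|I|}\int_I w(\ldots,t,\ldots)\,dt\big)d\bar y^j$, and similarly for the $w^{-1/(p-1)}$ average (or the $\esssup$ when $p=1$). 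Applying the $A_p^{\mathsf{rec}}$ bound to $R_\delta$ and letting $\delta\to0$, the Lebesgue differentiation theorem gives the desired pointwise-a.e.\ bound $[w_{\bar x^j}]_{A_p}\leq[w]_{A_p^{\mathsf{rec}}}$ for a.e.\ $\bar x^j$; taking the $\esssup$ over $\bar x^j$ and the sup over $j$ finishes this direction. The main technical point to be careful about is that one must run the differentiation argument simultaneously for both factors (the average of $w$ and the average of $w^{-1/(p-1)}$) on a common full-measure set of points $\bar x^j$, which is routine since each factor is locally integrable in $\bar x^j$.

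Next, for the qualitative reverse implication ($[w]_{A_p^*}<\infty\Rightarrow[w]_{A_p^{\mathsf{rec}}}<\infty$) with $1\leq p<\infty$: given an arbitrary rectangle $R=I_1\times\cdots\times I_n$, I would iterate the one-dimensional $A_p$ condition across the $n$ coordinate directions. Fixing all but the first coordinate, the inner one-dimensional average over $I_1$ of $w$ times the corresponding average of $w^{-1/(p-1)}$ raised to $p-1$ is controlled by $[w]_{A_p^*}$ (by Hölder, in the form $\int_{I_1}w\cdot(\int_{I_1}w^{-1/(p-1)})^{p-1}\leq[w]_{A_p^*}|I_1|^p$, i.e.\ one does not separate the two integrals but keeps them paired on the same slice). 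Integrating in the remaining variables and applying Hölder's inequality (or Minkowski's integral inequality for the $(p-1)$-power) to pass the estimate to the product structure, then repeating across each of the $n$ directions, yields $[w]_{A_p^{\mathsf{rec}}}\leq[w]_{A_p^*}^n$ or a similar bound. This direction is only claimed qualitatively in the lemma, so I need not optimize the $n$-fold loss. I expect this iteration to be the main obstacle: one must handle the pairing of $w$ and $w^{-1/(p-1)}$ carefully so that each application of the one-dimensional condition on a slice produces a clean constant, and then assemble the slices via an appropriate application of Hölder; the case $p=1$ (with $\esssup$ replacing an average) needs a minor separate remark.

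Finally, for $p=\infty$: here $[w]_{A_\infty^*}$ is built from the one-dimensional Fujii--Wilson constants of the slices, and I would reduce to the finite-$p$ case using the standard facts $A_\infty=\bigcup_{p>1}A_p$ and the comparability (in one dimension) of the Fujii--Wilson constant with membership in some $A_p$. Concretely, $w\in A_\infty^*$ iff every slice lies in $A_\infty$ uniformly iff (by the one-dimensional theory) every slice lies in some common $A_p$ uniformly iff $w\in A_p^*$ for some $p<\infty$ iff (by the already-proved finite-$p$ equivalences) $[w]_{A_p^{\mathsf{rec}}}<\infty$ iff $[w]_{A_\infty^{\mathsf{rec}}}<\infty$; the inequality $[w]_{A_\infty^*}\leq[w]_{A_p^{\mathsf{rec}}}$ is not asserted, consistent with the lemma only claiming the quantitative bound for $p<\infty$. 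This closes all cases.
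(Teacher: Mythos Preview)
Your plan is sound and aligns with the paper's own treatment, which is extremely terse: the paper simply declares the lemma classical, cites \cite{GaRu} and \cite{BaKu}, and remarks that the inequality $[w]_{A_p^*}\le[w]_{A_p^{\mathsf{rec}}}$ follows from the Lebesgue differentiation theorem---precisely the thin-rectangle argument you describe.

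One remark on the reverse implication $[w]_{A_p^*}<\infty\Rightarrow[w]_{A_p^{\mathsf{rec}}}<\infty$: the direct manipulation of the product $\langle w\rangle_R\langle w^{-1/(p-1)}\rangle_R^{p-1}$ that you sketch is workable but awkward to execute as written. The cleanest way to carry out your iteration is to pass through the equivalent characterization
\[
[w]_{A_p}=\sup_{I}\sup_{f\ge0}\frac{\langle f\rangle_I^{\,p}\,w(I)}{\int_I f^p w},
\]
i.e.\ the one-dimensional $A_p$ constant is exactly the $p$-th power of the $L^p(w)\to L^p(w)$ norm of the averaging operator $f\mapsto\langle f\rangle_I\ind_I$. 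Since averaging over a rectangle factors as a composition of one-dimensional averagings, Fubini and the uniform slicewise bound give $\|A_R\|_{L^p(w)\to L^p(w)}\le[w]_{A_p^*}^{n/p}$; testing on $f=w^{-1/(p-1)}$ then yields $[w]_{A_p^{\mathsf{rec}}}\le[w]_{A_p^*}^{\,n}$, which is sharp in view of the tensor-product example mentioned after the lemma. This is likely what you had in mind by ``pairing $w$ and $w^{-1/(p-1)}$'' and iterating; I mention it only because a naive attempt to bound $\langle w\rangle_R\langle\sigma\rangle_R^{p-1}$ by first applying the slicewise condition and then averaging runs into the (false) subclaim that averages of $A_p$ weights are $A_p$ with the same constant. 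Your handling of the $p=\infty$ case by reduction to finite $p$ is fine.
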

This lemma is classical and the proof can be found for example in \cite{GaRu}. The inequality relating the two constants above is a simple consequence of the Lebesgue differentiation theorem. Observe that in dimension one there is no distinction between one-parameter and multiparameter weights so we will just use the notation $[w]_{A_p}$ for one-dimensional weights.

Observe that in the lemma above, equality may occur in $[w]_{A_p ^*}\leq [w]_{A_p ^\mathsf{rec}}$ as for example in the case $w(x)\coloneqq \upsilon(x_1)$, where $\upsilon$ is a one-dimensional weight. However, $[w]_{A_p ^*}$ can be a lot smaller than $[w]_{A_p ^\mathsf{rec}}$ as for example in the case $w(x)\coloneqq \upsilon(x_1)\cdots\upsilon(x_n)$ with $\upsilon$ as above. Indeed, in this case we have $[w]_{A_p ^*}=[\upsilon]_{A_p}$ while $[w]_{A_p ^\mathsf{rec}}=[\upsilon]_{A_p} ^n \gg [w]_{A_p ^*} $.

We now recall one of the most important properties of Muckenhoupt weights, the fact that they satisfy a reverse H\"older inequality, together with an alternative characterization of $A_\infty ^*$. We state here a quantitative one-dimensional version which is tailored to the needs of this paper.
\begin{lemma}\label{l.rhi} Let $w$ be a non-negative, locally integrable function on the real line.
\begin{itemize}
 \item [(i)] If $w\in A_\infty$ then for all intervals $I\subseteq \R$ and all $0<\epsilon \leq (4 [w]_{A_\infty }-1)^{-1}$ we have the reverse H\"older inequality
\[
 \frac{1}{|I|}\int_I w^{1+\epsilon} \leq 2\Big(\frac{1}{|I|} \int_I w\Big)^{1+\epsilon}.
\]
Furthermore, for all intervals $I\subseteq \R$ and all measurable $E\subseteq I$ we have
\[
\frac{w(E)}{w(I)}\leq 2 \Big(\frac{|E|}{|I|}\Big) ^{(4[w]_{A_\infty })^{-1}}.
\]
\item[(ii)] Conversely, if there exist constants $B,\beta \geq 1$ such that for all intervals $I\subseteq \R$ and all measurable $E\subseteq I$ we have
\[
 \frac{w(E)}{w(I)}\leq B \Big(\frac{|E|}{|I|}\Big)^\frac{1}{\beta}
\]
then $w$ satisfies the reverse H\"older inequality
\[
 \Big(\frac{1}{|I|}\int_I w^r \Big)^\frac{1}{r} \leq B^\frac{\beta}{r'} \Big(\frac{ \beta'-1}{\beta'-r}\Big)^\frac{1}{r} \frac{1}{|I|}\int_I w
\]
for all intervals $I\subseteq \R$, whenever $r<\beta'$; here $1/\beta+1/\beta'=1$. Furthermore $w\in A_\infty$ and $[w]_{A_\infty} \lesssim \beta (1+\ln B)$.
\end{itemize}
\end{lemma}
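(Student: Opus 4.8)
The plan is to treat the two parts separately. Part (i) is a known quantitative reverse Hölder inequality, from which the stated power-decay estimate follows by Hölder's inequality; part (ii) is obtained directly from the hypothesis by the distribution-function (layer-cake) method.

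\emph{Part (i).} The first displayed inequality is precisely a quantitative reverse Hölder inequality for one-dimensional $A_\infty$ weights, with exponent depending \emph{linearly} on the Fujii--Wilson constant $[w]_{A_\infty}$; such inequalities are by now classical and appear, with explicit constants, in \cites{HytP,HytPR,Kin}. I would recover the precise form stated here by the usual Calderón--Zygmund stopping-time argument: fix an interval $I$, normalise $\tfrac1{|I|}\int_I w=1$, form the family $\mathcal S$ of maximal dyadic (with respect to the dyadic structure of $I$) subintervals on which the mean of $w$ strictly doubles, note that on the complementary pieces $E_S$ one has $w\le 2\langle w\rangle_S$ a.e.\ while $\sum_{S\in\mathcal S}w(S)\le 2[w]_{A_\infty}w(I)$ as a consequence of the bound $\int_I\MM(w\ind_I)\le[w]_{A_\infty}w(I)$, and then estimate $\int_I w^{1+\epsilon}\le 2^{\epsilon}\sum_{S\in\mathcal S}\langle w\rangle_S^{\,\epsilon}w(E_S)$, which the packing estimate controls; the sharp bookkeeping giving the constant $2$ for $\epsilon$ in the stated range is the content of the cited references. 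The second displayed inequality then follows purely from the first by Hölder's inequality: for measurable $E\subseteq I$,
\[
 \frac{w(E)}{w(I)}\;\le\;\Bigl(\frac{|E|}{|I|}\Bigr)^{\frac{\epsilon}{1+\epsilon}}\,\frac{\bigl(\tfrac1{|I|}\int_I w^{1+\epsilon}\bigr)^{\frac1{1+\epsilon}}}{\tfrac1{|I|}\int_I w}\;\le\;2\,\Bigl(\frac{|E|}{|I|}\Bigr)^{\frac{\epsilon}{1+\epsilon}},
\]
and the choice $\epsilon=(4[w]_{A_\infty}-1)^{-1}$ gives exactly $\tfrac{\epsilon}{1+\epsilon}=(4[w]_{A_\infty})^{-1}$.

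\emph{Part (ii), reverse Hölder inequality.} Fix $I$, put $\langle w\rangle_I\coloneqq\tfrac1{|I|}\int_I w$ and $E_\lambda\coloneqq\{x\in I:w(x)>\lambda\}$. Applying the hypothesis to $E_\lambda\subseteq I$ together with the trivial bound $|E_\lambda|\le\lambda^{-1}w(E_\lambda)$ gives, for $\lambda\ge\langle w\rangle_I$,
\[
 w(E_\lambda)\;\le\;B^{\beta'}\,w(I)\,\Bigl(\frac{\lambda}{\langle w\rangle_I}\Bigr)^{-(\beta'-1)},
\]
using $(1-\tfrac1\beta)^{-1}=\beta'$ and $\tfrac{\beta'}{\beta}=\beta'-1$. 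Plugging this into the layer-cake identity $\int_I w^r=(r-1)\int_0^\infty\lambda^{r-2}w(E_\lambda)\,d\lambda$, splitting at a free level $\lambda_0\ge\langle w\rangle_I$ (using $w(E_\lambda)\le w(I)$ for $\lambda\le\lambda_0$ and the displayed decay for $\lambda>\lambda_0$, the latter integral converging exactly when $r<\beta'$), and optimising in $\lambda_0$ — the optimum sits at $\lambda_0=B^{\beta}\langle w\rangle_I$ — produces
\[
 \frac1{|I|}\int_I w^r\;\le\;B^{\beta(r-1)}\,\frac{\beta'-1}{\beta'-r}\,\langle w\rangle_I^{\,r}\qquad(1<r<\beta'),
\]
which is the asserted estimate after taking $r$-th roots, since $\beta(r-1)/r=\beta/r'$.

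\emph{Part (ii), the $A_\infty$ bound.} Recall $[w]_{A_\infty}=\sup_I\tfrac1{w(I)}\int_I\MM(w\ind_I)$. Fix $I$, normalise $w(I)=1$, and set $m(\lambda)\coloneqq|\{x\in I:\MM(w\ind_I)(x)>\lambda\}|$. Decomposing $\MM(w\ind_I)$ on $I$ into its two one-sided maximal functions and invoking the rising-sun (F.~Riesz) lemma yields the sharp relation $m(\lambda)\lesssim\lambda^{-1}w(\{x\in I:\MM(w\ind_I)(x)>\lambda\})$ for $\lambda\gtrsim\langle w\rangle_I$; combining this with the hypothesis applied to that super-level set (which has Lebesgue measure $m(\lambda)$ and lies in $I$) gives $m(\lambda)\lesssim B^{\beta'}\lambda^{-\beta'}|I|^{-(\beta'-1)}$. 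Together with $m(\lambda)\le|I|$ and the weak $(1,1)$ bound $m(\lambda)\lesssim\lambda^{-1}$, inserting these three estimates into $\int_I\MM(w\ind_I)=\int_0^\infty m(\lambda)\,d\lambda$ and splitting at the crossover levels $\lambda\eqsim|I|^{-1}$ and $\lambda\eqsim B^{\beta}|I|^{-1}$ leaves a logarithmic middle term of size $\lesssim\beta\ln B$ and bounded outer terms, whence $[w]_{A_\infty}\lesssim\beta(1+\ln B)$.

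\emph{Where the difficulty lies.} Part (ii) is elementary once the right input is available, and the only genuinely delicate point is that the bound on $[w]_{A_\infty}$ must come out proportional to $\beta\ln B$ rather than to a power of $B$; this forces the use of the \emph{sharp} one-dimensional weak-type estimate supplied by the rising-sun lemma, since a crude weak $(1,1)$ bound alone already produces a divergent integral. In part (i) the single non-trivial ingredient is the classical reverse Hölder inequality with linear dependence on $[w]_{A_\infty}$, for which I would rely on \cites{HytP,HytPR,Kin}; extracting the clean constant $2$ and the exponent $(4[w]_{A_\infty})^{-1}$ from the stopping-time proof is a matter of careful but standard bookkeeping.
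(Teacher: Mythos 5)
Your Part~(i) and the reverse H\"older inequality in Part~(ii) follow the paper's proof essentially verbatim: the paper likewise cites \cite{HytPR}*{Theorem 2.3} for the quantitative one-dimensional reverse H\"older estimate, derives the decay estimate $w(E)/w(I)\le 2(|E|/|I|)^{(4[w]_{A_\infty})^{-1}}$ by H\"older's inequality, and obtains the reverse H\"older estimate in Part~(ii) by exactly your layer-cake computation (after normalizing $\langle w\rangle_I=1$, with the split at $\lambda_0=B^\beta$). These portions of your argument are correct and coincide with the paper.

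For the $A_\infty$ bound $[w]_{A_\infty}\lesssim\beta(1+\ln B)$, however, you take a genuinely different route, and it works. The paper feeds the just-proved reverse H\"older inequality into the chain
\[
 \frac{1}{|I|}\int_I \MM(w\ind_I)\le\Bigl(\frac{1}{|I|}\int_I(\MM(w\ind_I))^r\Bigr)^{1/r}\lesssim r'\Bigl(\frac{1}{|I|}\int_I w^r\Bigr)^{1/r}\lesssim r'\,B^{\beta/r'}\Bigl(\frac{\beta'-1}{\beta'-r}\Bigr)^{1/r}\frac{1}{|I|}\int_I w,
\]
and then optimizes over $r$ near $1+\beta'/(2\beta(1+\ln B))$. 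You instead bypass the reverse H\"older altogether: you pair the localized one-dimensional weak~$(1,1)$ estimate $m(\lambda)\lesssim\lambda^{-1}w(\{x\in I:\MM(w\ind_I)>\lambda\})$ (from the rising-sun/Vitali covering argument) with the decay hypothesis applied to the super-level set, obtaining $m(\lambda)\lesssim B^{\beta'}\lambda^{-\beta'}|I|^{1-\beta'}$, and then integrate the distribution function directly, splitting at $\lambda\eqsim|I|^{-1}$ and $\lambda\eqsim B^\beta|I|^{-1}$. I checked the three pieces: the low range contributes $\lesssim 1$, the middle range $\lesssim\beta\ln B$, and the tail $\lesssim 1/(\beta'-1)=\beta-1$, giving $\lesssim\beta(1+\ln B)$ as required. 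Your approach avoids both the optimization in $r$ and the explicit $\|\MM\|_{L^r\to L^r}\lesssim r'$ constant, at the cost of requiring the \emph{localized} weak~$(1,1)$ bound (the crude version with $\|f\|_1$ on the right would not decay fast enough, as you note). The paper's method is slightly more portable to higher-dimensional or abstract settings since it only needs Jensen, $L^r$-boundedness and the already-established reverse H\"older, but on the real line your argument is arguably more elementary and self-contained.
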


\begin{proof}The reverse H\"older inequality of (i) is the one-dimensional case of \cite{HytPR}*{Theorem 2.3}. The second statement in (i) follows immediately by a simple application of H\"older's inequality and the reverse H\"older inequality. For (ii) let us fix an interval $I$. In order to prove the reverse H\"older inequality in the statement of the lemma we can assume that $w(I)/|I|=1$. Defining $E_\lambda\coloneqq \{x\in I:\, w>\lambda\}$ we then have
\[
 \frac{|E_\lambda|}{|I|} \leq \frac{1}{\lambda} \frac{w(E_\lambda)}{w(I)} \leq \frac{B}{\lambda} \Big(\frac{|E_\lambda|}{|I|}\Big)^\frac{1}{\beta},
\]
where the first inequality is trivial and the second inequality following by the hypothesis of (ii). Thus for $\lambda>0$ we get the estimate
\[
 \frac{|E_\lambda|}{|I|}\leq \Big(\frac{B}{\lambda}\Big)^{\beta'}.
\]
Using the hypothesis of (ii) the previous inequality implies
\[
 \frac{w(E_\lambda)}{w(I)} \leq B^{\beta'} \lambda^{-\frac{\beta'}{\beta} }\;.
\]
Now for $1<r<1+\beta'/\beta=\beta'$ we can estimate
\[
 \begin{split}
 \frac{1}{w(I)} \int_I w^r& =\frac{1}{w(I)}\int_I w^{r-1} w= \frac{1}{w(I)}\int_0 ^\infty (r-1)\lambda^{r-2} w(E_\lambda)d\lambda
\\
&\leq B^{\beta(r-1)} + (r-1)B^{\beta'}\frac{B^{\beta(r-1-\beta'/\beta)}}{\beta'/\beta -(r-1)} =B^{\beta(r-1)}\frac{\beta'-1}{\beta'-r}
\end{split}
\]
which is the desired reverse H\"older inequality. In order to see the estimate for $[w]_{A_\infty}$ we utilize the $L^p$ bounds of the Hardy-Littlewood maximal operator  $\MM$ on the real line. As in the proof of \cite{HytP}*{Theorem 2.3} we have for every interval $I\subseteq \R$ that
\[
\begin{split}
 \frac{1}{|I|}\int_I \MM(w\ind_I)& \leq \Big(\frac{1}{|I|}\int_I \big(\MM(w\ind_I)\big)^r\Big)^\frac{1}{r}\lesssim r' \Big(\frac{1}{|I|}\int_I   w ^r\Big)^\frac{1}{r}
\\
& \leq r' B^\frac{\beta}{r'} \Big(\frac{ \beta'-1}{\beta'-r}\Big)^\frac{1}{r} \frac{1}{|I|}\int_I w.
\end{split}
\]
Taking the supremum over all intervals $I$ and using the Fujii-Wilson definition of $[w]_{A_\infty}$ we get
\begin{equation}\label{e.ainfest}
 [w]_{A_\infty} \lesssim \inf_{1<r<\beta'} r' B^\frac{\beta}{r'} \Big(\frac{ \beta'-1}{\beta'-r}\Big)^\frac{1}{r}.
\end{equation}
If $\beta>2$ then consider $r_o\coloneqq1+  \beta ' /(2\beta(1+\ln B))$. Obviously $1<r_o<1+\beta'/\beta=\beta'$ and we  can estimate
\[
  r_o ' \eqsim \beta(1+\ln B)  \quad\text{and}\quad  B^\frac{\beta}{r_o '} \Big(\frac{ \beta'-1}{\beta'-r_o}\Big)^\frac{1}{r_o}\lesssim 1 .
\]
By \eqref{e.ainfest} this gives the claim for $\beta>2$. If $\beta \in [1, 2] $ then the hypothesis is always true for $\beta=2$ so the previous argument gives
\[
 [w]_{A_\infty} \lesssim (1+\ln B) \lesssim \beta (1+\ln B)
\]
and we are done.
\end{proof}
We close this section with a technical lemma which will be useful in a number of occasions when one assumes, or manages to prove, some Solyanik estimate for $\Cs$.  In particular, we will use this lemma when showing the optimality of Theorem~\ref{t.weightedsol} as well as in the proof of Theorem~\ref{t.rhi}

\begin{lemma}\label{l.fromsol} Let $w$ be a weight in $\R^n$ and assume that there exist constants $B,\beta\geq 1$ and $\gamma>0$ such that
\[
\Cs(\alpha)-1 \leq B (1-\alpha)^\frac{1}{\beta}\quad\text{for all}\quad 1>\alpha >1-e^{-\gamma}\;.
\]
Then the following hold:
\begin{itemize}
 \item [(i)] For all rectangular parallelepipeds $R\subseteq\R^n$ and all measurable sets $E\subseteq R$ we have
\[
 \frac{w(E)}{w(R)} \leq \max(B,e^\frac{\gamma}{\beta}) \Big(\frac{|E|}{|R|}\Big)^\frac{1}{\beta},
\]
\item[(ii)] For all rectangular parallelepipeds $R\subseteq R^n$ we have the reverse H\"older inequality
\[
 \Big(\frac{1}{|R|} \int_R w^r \Big)^\frac{1}{r} \leq \max(B,e^\frac{\gamma}{\beta})^\frac{\beta}{r'} \Big(\frac{ \beta'-1}{\beta'-r}\Big)^\frac{1}{r} \frac{1}{|R|}\int_R w
\]
for all $1<r<\beta'$.
\item[(iii)] We have that $w\in A_\infty ^*$ and $[w]_{A_\infty ^*} \lesssim \beta (1+\max(\gamma/\beta,\ln B))$.
\end{itemize}
\end{lemma}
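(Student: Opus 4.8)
The plan is to derive all three parts from one elementary geometric fact. If $R$ is a rectangle and $E\subseteq R$, then for every $x\in R$ we have $\Ms(\ind_{R\setminus E})(x)\geq |R\setminus E|/|R| = 1-|E|/|R|$, so $R\subseteq\{\Ms(\ind_{R\setminus E})>\alpha\}$ whenever $\alpha<1-|E|/|R|$. Feeding this into the definition of $\Cs$ converts the hypothesis directly into the smallness estimate (i); parts (ii) and (iii) then follow formally, the first by rerunning the proof of Lemma~\ref{l.rhi}(ii) for rectangles, the second by slicing (i) into one-dimensional inequalities.

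For part (i), fix a rectangle $R$ with $w(R)>0$ (if $w(R)=0$ there is nothing to prove) and set $\lambda\coloneqq|E|/|R|$; we may assume $\lambda>0$. If $\lambda\geq e^{-\gamma}$ the claim is trivial, since $w(E)/w(R)\leq 1\leq e^{\gamma/\beta}\lambda^{1/\beta}\leq \max(B,e^{\gamma/\beta})\lambda^{1/\beta}$. If $\lambda<e^{-\gamma}$, then for every $\alpha\in(1-e^{-\gamma},1-\lambda)$ the observation above and the definition of $\Cs$ give $w(R)\leq\Cs(\alpha)\,w(R\setminus E)$, provided $0<w(R\setminus E)<\infty$, which rearranges to
\[
\frac{w(E)}{w(R)}=1-\frac{w(R\setminus E)}{w(R)}\leq 1-\frac{1}{\Cs(\alpha)}\leq \Cs(\alpha)-1\leq B(1-\alpha)^{1/\beta},
\]
and letting $\alpha\uparrow 1-\lambda$ yields $w(E)/w(R)\leq B\lambda^{1/\beta}$. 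The only step here that is not completely mechanical — and the one I expect to cost the most care — is ruling out the degenerate case $w(R\setminus E)=0$ while $w(R)>0$ and $\lambda<e^{-\gamma}$. I would dispose of it by observing that then $w(E)=w(R)>0$ and $w\ll dx$, so one may choose balls $B_\rho\subseteq E$ with $0<w(B_\rho)\downarrow 0$; applying the same argument to $F_\rho\coloneqq (R\setminus E)\cup B_\rho$, which still satisfies $\Ms(\ind_{F_\rho})\geq 1-\lambda$ on $R$ and $0<w(F_\rho)=w(B_\rho)<\infty$, gives $w(R)\leq\Cs(\alpha)\,w(B_\rho)$, and since the hypothesis forces $\Cs(\alpha)<\infty$, letting $\rho\to 0$ contradicts $w(R)>0$.

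For part (ii), put $M\coloneqq\max(B,e^{\gamma/\beta})$. Part (i) is precisely the rectangular analogue of the hypothesis of Lemma~\ref{l.rhi}(ii), and the proof of that lemma uses only that hypothesis together with the layer-cake formula, never any feature special to intervals; so I would simply repeat it verbatim with intervals replaced by rectangles and $B$ replaced by $M$, obtaining the asserted reverse H\"older inequality for every $1<r<\beta'$.

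Finally, for part (iii), fix a coordinate direction $j$. Applying (i) to the rectangles $R=Q\times I$ and the subsets $E=Q\times F$, where $Q$ is a cube in the variables $\bar x^j$, $I$ a bounded interval and $F\subseteq I$ measurable, and using Fubini to write $w(Q\times F)=\int_Q w_{\bar x^j}(F)\,d\bar x^j$ and similarly for $I$, we obtain
\[
\int_Q w_{\bar x^j}(F)\,d\bar x^j\leq M\Big(\tfrac{|F|}{|I|}\Big)^{1/\beta}\int_Q w_{\bar x^j}(I)\,d\bar x^j\qquad\text{for all cubes }Q.
\]
Dividing by $|Q|$ and letting $Q$ shrink to a point, the Lebesgue differentiation theorem gives $w_{\bar x^j}(F)\leq M(|F|/|I|)^{1/\beta}w_{\bar x^j}(I)$ for a.e. $\bar x^j$; running this over the countable family of pairs $(I,F)$ with $I$ a rational interval and $F$ a finite union of rational subintervals, and using a routine approximation to pass to arbitrary $I$ and $F$, we conclude that for a.e. $\bar x^j$ the weight $w_{\bar x^j}$ satisfies the one-dimensional hypothesis of Lemma~\ref{l.rhi}(ii) with constant $M$. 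That lemma then yields $[w_{\bar x^j}]_{A_\infty}\lesssim\beta(1+\ln M)=\beta(1+\max(\gamma/\beta,\ln B))$, uniformly in $\bar x^j$ and in $j$, and taking $\sup_j\esssup_{\bar x^j}$ gives $w\in A_\infty^*$ with $[w]_{A_\infty^*}\lesssim\beta(1+\max(\gamma/\beta,\ln B))$.
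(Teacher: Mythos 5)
Your proof is correct and follows essentially the same route as the paper's for all three parts: (i) by plugging $R\setminus E$ into the defining supremum for $\Cs$ and letting $\alpha\uparrow 1-|E|/|R|$, (ii) by rerunning the layer-cake argument of Lemma~\ref{l.rhi}(ii) with intervals replaced by rectangles, and (iii) by slicing (i) into one-dimensional statements via Fubini and Lebesgue differentiation. Incidentally, you are more careful than the published argument in handling the degenerate possibility $w(R\setminus E)=0$ in (i) and in noting that the a.e.\ exceptional set in (iii) must first be controlled along a countable family of pairs $(I,F)$; both points are silently elided in the paper, and your fixes are sound.
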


\begin{proof} For (i),  let $\alpha\in(0,1)$ with $\alpha>1-e^{-\gamma}$ and consider a rectangular parallelepiped $R\subseteq \R^n$ with sides parallel to the coordinate axes and a measurable set $S\subseteq R$. If $|S|/|R|>\alpha$ then $R\subseteq \{x\in\R^n:\, \Ms(\ind_S)(x)>\alpha\}$. Thus $w(R)\leq \Cs(\alpha) w(S)$ and calling $E\coloneqq R\setminus S$ we have
\[
 w(E)\leq \frac{\Cs(\alpha)-1}{\Cs(\alpha)}w(R)\leq B(1-\alpha)^\frac{1}{\beta} w(R)\quad \text{whenever}\quad \frac{|E|}{|R|}<1-\alpha,\quad \alpha>1-e^{-\gamma},
\]
by the hypothesis and the fact that we always have $\Cs(\alpha)\geq 1$. Letting $\alpha\to 1-|E|/|R|$ we get
\[
 \frac{w(E)}{w(R)}\leq B\Big(\frac{|E|}{|R|}\Big)^\frac{1}{\beta} \quad \text{whenever}\quad \frac{|E|}{|R|}<e^{-\gamma}.
\]
If $|E|/|R|>e^{-\gamma}$ we trivially have
\[
 \frac{w(E)}{w(R)}\leq e^\frac{\gamma}{\beta}\Big(\frac{|E|}{|R|}\Big)^\frac{1}{\beta}.
\]
Thus for every rectangular parallelepiped $R$ and measurable $E\subseteq R$ we can conclude
\[
 \frac{w(E)}{w(R)}\leq \tilde B \Big(\frac{|E|}{|R|}\Big)^\frac{1}{\beta} \quad \text{with}\quad \tilde B\coloneqq \max (B,e^\frac{\gamma}{\beta})
\]
as we wanted.

The proof of (ii) is identical to the proof of the reverse H\"older inequality in (ii) of Lemma~\ref{l.rhi}.

For (iii) we begin by fixing some $j\in\{1,\ldots,n\}$. For a.e. $\bar x^j\in\R^{n-1}$, the estimate in (i) and the Lebesgue differentiation theorem implies that for all intervals $I\subseteq\R$ and all measurable sets $E\subseteq I$ we have
\[
 \frac{w_{\bar x^j}(E)}{w_{\bar x^j}(I)}\leq \max(B,e^\frac{\gamma}{\beta}) \Big(\frac{|E|}{|I|}\Big)^\frac{1}{\beta}.
\]
By (ii) of Lemma~\ref{l.rhi} this implies that for a.e. $\bar x^j\in\R^{n-1}$ we have $w_{\bar x^j}\in A_\infty$ and $[w_{\bar x^j}]_{A_\infty} \lesssim \beta(1+\max(\gamma/\beta,ln B)$. Since the previous estimate is uniform in $j\in\{1,\ldots,n\}$ and a.e. $\bar x^j\in\R^{n-1}$ it follows that $[w]_{A_\infty ^*} \lesssim  \beta(1+\max(\gamma/\beta,\ln B)$ as we wanted.
\end{proof}

\section{One-dimensional results}
A typical technique of proof in  multiparameter harmonic analysis is based on induction or \emph{reduction of parameters}. The base step of the induction is the one-parameter case which is naturally, but not necessarily, identified with the one-dimensional case. Thus we gather here all the weighted one-dimensional results which will be used in the inductive proofs in the rest of the paper. The unweighted versions of these results are contained in \cite{solyanik}. Here we adopt a slight variation introduced in \cite{hp1} which is more suitable for our purposes.

\subsection{Weighted one-dimensional Solyanik estimates} Remember that the one-dimensional Hardy-Littlewood maximal operator is defined by
\[
\mathsf M_1 f(x) \coloneqq\sup_{x\in I} \frac{1}{|I|}\int_I |f(y)|dy,\quad x\in \R,
\]
where the supremum is taken over all intervals $I\subseteq \R$ with $I\ni x$. The corresponding sharp Tauberian constant with respect to a weight $w$ is defined for $\alpha\in(0,1)$ as
\[
\CC(\alpha)\coloneqq \sup_{\substack {E\subset \R \\ 0<w(E)<+\infty}}\frac{1}{w(E)}w(\{x\in\R:\, \mathsf M_1 (\ind_E)(x)>\alpha\}).
\]
The following lemma is the weighted version of a completely analogous lemma from \cite{hp1}.
\begin{lemma} \label{l.weightsol1d} Let $w\in A_\infty $ be a Muckenhoupt weight on the real line and let $E\subset \R$ be a measurable set with $0<|E|<+\infty$. Then for all $0\leq \gamma<\alpha<1$ with $1-\alpha<4^{-4[w]_{A_\infty }}(1-\gamma)$ we have
	\[
	w(\{x\in\R:\, \MM(\ind_E+\gamma\ind_{E^\mathsf{c}})>\alpha\})\leq \Big(1-4 \Big(\frac{1-\alpha}{1-\gamma}\Big)^{(4[w]_{A_\infty })^{-1}}\Big)^{-1}w(E).
	\]
\end{lemma}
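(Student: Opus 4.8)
The plan is to reduce the weighted estimate to a geometric covering statement about the level set, and then feed that geometry into the reverse-Hölder-type consequence of $A_\infty$ recorded in Lemma~\ref{l.rhi}(i). First I would analyze the set $U_\alpha\coloneqq\{x\in\R:\,\MM(\ind_E+\gamma\ind_{E^{\mathsf c}})(x)>\alpha\}$. Since $\ind_E+\gamma\ind_{E^{\mathsf c}}\geq\gamma$ everywhere, and $\alpha<1$, the maximal function exceeds $\alpha$ on a set that is open (the non-centered operator on $\R$ produces open level sets), so $U_\alpha$ is a countable union of disjoint open intervals; it suffices to work on one such component interval $I$. On $I$ we have, by the standard ``rising sun'' / stopping-time structure of $\MM$ on $\R$ that for the generating interval $I$ itself one gets the averaged bound
\[
\frac{1}{|I|}\int_I\big(\ind_E+\gamma\ind_{E^{\mathsf c}}\big)\;\geq\;\alpha,
\]
equivalently $|E\cap I|/|I|\geq(\alpha-\gamma)/(1-\gamma)$, i.e.\ writing $F\coloneqq I\setminus E$ one has $|F|/|I|\leq(1-\alpha)/(1-\gamma)$. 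This is the key geometric gain: each component of the level set is mostly filled by $E$, with the complement controlled by $(1-\alpha)/(1-\gamma)$.

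Next I would convert this Lebesgue-measure information into $w$-measure information using the second inequality of Lemma~\ref{l.rhi}(i): because $w\in A_\infty$, for $F\subseteq I$ we have $w(F)/w(I)\leq 2\big(|F|/|I|\big)^{(4[w]_{A_\infty})^{-1}}$, hence on each component interval $I$
\[
w(I)\;\leq\;w(E\cap I)\;+\;w(F)\;\leq\;w(E\cap I)\;+\;2\Big(\frac{1-\alpha}{1-\gamma}\Big)^{(4[w]_{A_\infty})^{-1}}w(I).
\]
Under the hypothesis $1-\alpha<4^{-4[w]_{A_\infty}}(1-\gamma)$ the quantity $\theta\coloneqq 2\big((1-\alpha)/(1-\gamma)\big)^{(4[w]_{A_\infty})^{-1}}$ satisfies $\theta<2\cdot4^{-1}=1/2<1$, so one may absorb: $w(I)\leq(1-\theta)^{-1}w(E\cap I)$. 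Summing over the disjoint components $I_k$ of $U_\alpha$ and using $\sum_k w(E\cap I_k)\leq w(E)$ gives exactly
\[
w(U_\alpha)=\sum_k w(I_k)\leq\Big(1-2\Big(\tfrac{1-\alpha}{1-\gamma}\Big)^{(4[w]_{A_\infty})^{-1}}\Big)^{-1}w(E).
\]
One should double-check the numerical constant in front: the statement has a $4$ rather than a $2$, which suggests either a slightly lossier application of Hölder at the component level or a factor-two loss when passing from the open level set to its defining intervals (e.g.\ comparing $I$ with a slightly larger interval, or handling the two ``halves'' of a rising-sun interval separately); in the write-up I would track constants carefully and, if needed, replace $w(F)/w(I)\leq 2(\cdot)^{1/(4[w])}$ applied on a doubled interval to land on the clean constant $4$ with the stated threshold $4^{-4[w]}$.

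The main obstacle I anticipate is the first step — justifying that on each component interval $I$ of the level set the average of $\ind_E+\gamma\ind_{E^{\mathsf c}}$ over $I$ is at least $\alpha$. This is the one-dimensional covering fact underlying all Solyanik-type arguments: for the non-centered $\MM$ on $\R$, $\{\MM g>\alpha\}$ is open and each of its components $I$ satisfies $\fint_I g\geq\alpha$ (this is essentially the content of the rising-sun lemma, and is the $n=1$ case of the unweighted estimates in \cite{solyanik} and the variant in \cite{hp1}). Once that is in hand, everything else is the absorption argument above plus bookkeeping of constants. The passage to $w$-measure is then a soft consequence of the $A_\infty$ power-bound in Lemma~\ref{l.rhi}(i), and the disjointness of the components makes the summation immediate with no further covering loss.
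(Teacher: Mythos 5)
The first step of your proposal contains a genuine gap. You claim that each component interval $I$ of the open level set $U_\alpha=\{\MM g>\alpha\}$ satisfies $\frac{1}{|I|}\int_I g\geq\alpha$, attributing this to the rising sun lemma. That property holds for the \emph{one-sided} maximal function, but it fails for the two-sided non-centered $\MM$. For a concrete counterexample take $g=\ind_{[0,1]}+\ind_{[2,3]}$ and $\alpha=2/5$: then $\{\MM g>2/5\}$ is the single interval $(-2,5)$, while $\frac{1}{7}\int_{-2}^{5}g=2/7<2/5$. So you cannot test the reverse Hölder inequality directly on components of the level set, and the absorption argument as you set it up does not run.

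The paper's proof repairs exactly this point. One covers (a compact subset $K$ of) $U_\alpha$ by ``witness'' intervals $I_j$, each chosen so that $\frac{1}{|I_j|}\int_{I_j}f_{E,\gamma}>\alpha$; these are not the components of the level set. By compactness a finite subcover suffices, and then the classical one-dimensional covering lemma (see \cite{Gar}*{p.~24}) extracts a subcollection $\{I_{j_k}\}$ with $\bigcup_k I_{j_k}=\bigcup_j I_j$ and overlap $\sum_k\ind_{I_{j_k}}\leq 2$. On each selected interval one still has $|I_{j_k}\cap E^{\mathsf c}|/|I_{j_k}|\leq(1-\alpha)/(1-\gamma)$, and the application of Lemma~\ref{l.rhi}(i) followed by absorption proceeds exactly as in the remainder of your proposal. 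The bounded overlap, however, costs an additional factor of $2$ when passing from $\sum_k w(I_{j_k})$ to $w\bigl(\bigcup_k I_{j_k}\bigr)$, and this is precisely where the constant $4$ in the statement comes from — so your hunch about a factor-two loss ``in the covering'' was qualitatively on the right track, but the correct mechanism is the bounded-overlap subcover, not the component decomposition. Finally, note that the paper first handles $\gamma>0$ and then obtains $\gamma=0$ by a limiting argument $\delta\to 0^+$; your write-up should include that limiting step as well.
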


\begin{proof} For convenience we set $f_{E,\gamma}\coloneqq \ind_E +\gamma \ind_{E^\mathsf{c}}$ and first prove the case $\gamma>0$. There exists a countable collection of intervals $\{\tilde I_j\}_j$ such that $E_{\alpha,\gamma}\coloneqq \{x\in\R:\, \MM(\ind_E \gamma+\ind_{E^\mathsf{c}})(x)>\alpha\}\subseteq \cup_j \tilde I_j$ and
	\[
\frac{1}{|\tilde I_j|}	\int_{\tilde I_j} f_{E,\gamma}>\alpha.
	\]
Fixing some compact $K\subseteq E_{\alpha,\gamma}$ we have that $K\subseteq_j \cup_j I_j $ for a finite collection $\{I_j\}_j\subseteq \{\tilde I_j\}_j$. Furthermore, there exists a subcollection $\{I_{j_k}\}_k\subseteq \{I_j\}_j$ such that $\cup_k I_{j_k} = \cup _j I_{j}$ and $\sum_j \ind_{I_{j_k}}\leq 2$; see for example \cite{Gar}*{p. 24} for more details on this classical covering argument. Observe that for each $k$ we then have
\[
\frac{1}{|I_{j_k}|}\int_{I_{j_k}}\ind_E>\frac{\alpha-\gamma}{1-\gamma}
\]
and so $|I_{j_k}\cap E^{\mathsf{c}}|/|I_{j_k}| \leq  (1-\alpha) /(1-\gamma)$. Using (i) of Lemma~\ref{l.rhi} for $w$ we get that
\[
\frac{w(I_{j_k}\cap E^{\mathsf{c}})}{w(I_{j_k})}\leq 2 \Big(\frac{1-\alpha}{1-\gamma}\Big)^{(4[w]_{A_\infty })^{-1}}.
\]
We thus have
\[
\begin{split}
w\big(\bigcup_k I_{j_k}\big)&\leq w(E)+2 \Big(\frac{1-\alpha}{1-\gamma}\Big)^{(4[w]_{A_\infty })^{-1}} \sum_j w(I_{j_k})
\\
&\leq   w(E)+4 \Big(\frac{1-\alpha}{1-\gamma}\Big)^{(4[w]_{A_\infty })^{-1}} w\big(\bigcup_k I_{j_k}\big)
\end{split}
\]
and accordingly
\[
w(K)\leq \Big(1- 4 \Big(\frac{1-\alpha}{1-\gamma}\Big)^{(4[w]_{A_\infty })^{-1}}\Big)^{-1}w(E).
\]
This easily implies the desired estimate for $\gamma>0$.

Now for $\alpha>1-4^{-4[w]_{A_\infty}}$ we have for sufficiently small $\delta>0$
\[
w(\{x\in\R:\, \MM(\ind_E)(x)>\alpha\})\leq \Big(1- 4 \Big(\frac{1-\alpha}{1-\delta}\Big)^{(4[w]_{A_\infty })^{-1}}\Big)^{-1}w(E).
\]
Letting $\delta\to 0^+$ we get the claim for $\gamma=0$ as well.
\end{proof}

\subsection{One-dimensional Solyanik estimates with respect to Borel measures} In this section we consider the weighted maximal operator in one dimension
\[
\MMw f(x)\coloneqq \sup_{x\in I}\frac{1}{\mu(I)}\int_I |f(y)|d\mu(y),\quad x\in\R,
\]
where the supremum is taken with respect to all intervals $I\ni x$. The sharp Tauberian constant associated with $\MMw$ is then defined as
\[
\CCw (\alpha)\coloneqq \sup_{\substack {E\subset \R \\ 0<\mu(E)<+\infty}}\frac{1}{\mu(E)}\mu(\{x\in\R:\, \MMw (\ind_E)(x)>\alpha\}).
\]
In this case the corresponding Solyanik estimates are very simple to prove.
\begin{lemma} Let $\mu$ be a non-negative, locally finite Borel measure on the real line. Then there exists a numerical constant $c>0$ such that for all $0\leq \gamma <\alpha<1$ we have
	\[
	\mu(\{x\in\R:\, \MM ^\mu(\ind_E+\gamma\ind_{E^\mathsf{c}})(x)>\alpha\})\leq \Big(1+ 2 \Big(\frac{1-\alpha}{\alpha-\gamma}\Big)\Big)\mu(E).
	\]
\end{lemma}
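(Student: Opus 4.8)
The plan is to repeat, almost verbatim, the Calder\'on--Zygmund type covering argument from the proof of Lemma~\ref{l.weightsol1d}, with the measure $\mu$ now playing the role of the weight $w$. The essential simplification is that no reverse H\"older inequality enters: on each covering interval we only rearrange the defining average inequality, so there is no constraint linking $\alpha$ and $\gamma$ and the bound holds for all $0\le\gamma<\alpha<1$. Set $f_{E,\gamma}\coloneqq\ind_E+\gamma\ind_{E^{\mathsf c}}$ and $E_{\alpha,\gamma}\coloneqq\{x\in\R:\,\MM^\mu f_{E,\gamma}(x)>\alpha\}$; the latter set is open, since if $x$ lies in an open interval $I$ with $\mu(I)>0$ and $\mu(I)^{-1}\int_I f_{E,\gamma}\,d\mu>\alpha$ then every point of $I$ belongs to $E_{\alpha,\gamma}$. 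We may assume $0<\mu(E)<+\infty$, as otherwise there is nothing to prove.

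First I would fix a compact set $K\subseteq E_{\alpha,\gamma}$. By compactness it is covered by finitely many open intervals $\{I_j\}_j$, each with $\mu(I_j)>0$ and $\mu(I_j)^{-1}\int_{I_j}f_{E,\gamma}\,d\mu>\alpha$, and I would pass to a subcollection $\{I_{j_k}\}_k$ with $\bigcup_k I_{j_k}=\bigcup_j I_j\supseteq K$ and $\sum_k\ind_{I_{j_k}}\le 2$ via the one-dimensional covering lemma (as in \cite{Gar}*{p.~24}). Writing $\mu(I_{j_k})=\mu(I_{j_k}\cap E)+\mu(I_{j_k}\cap E^{\mathsf c})$ and unwinding the average inequality $\mu(I_{j_k}\cap E)+\gamma\,\mu(I_{j_k}\cap E^{\mathsf c})>\alpha\,\mu(I_{j_k})$ gives
\[
(1-\alpha)\,\mu(I_{j_k}\cap E)>(\alpha-\gamma)\,\mu(I_{j_k}\cap E^{\mathsf c}),\qquad\text{hence}\qquad\mu(I_{j_k}\cap E^{\mathsf c})<\frac{1-\alpha}{\alpha-\gamma}\,\mu(I_{j_k}\cap E).
\]

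Then, combining subadditivity with the bounded overlap identity $\sum_k\mu(I_{j_k}\cap E)=\int_E\sum_k\ind_{I_{j_k}}\,d\mu\le 2\mu(E)$, I would estimate
\[
\mu(K)\le\mu\Big(\bigcup_k I_{j_k}\Big)\le\mu(E)+\sum_k\mu(I_{j_k}\cap E^{\mathsf c})<\mu(E)+\frac{1-\alpha}{\alpha-\gamma}\sum_k\mu(I_{j_k}\cap E)\le\Big(1+\frac{2(1-\alpha)}{\alpha-\gamma}\Big)\mu(E).
\]
Since a locally finite Borel measure on $\R$ is inner regular on open sets (and $E_{\alpha,\gamma}$ is even a countable disjoint union of open intervals), taking the supremum over compact $K\subseteq E_{\alpha,\gamma}$ yields $\mu(E_{\alpha,\gamma})\le\bigl(1+2(1-\alpha)/(\alpha-\gamma)\bigr)\mu(E)$, which is the assertion with the numerical constant equal to $2$.

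There is essentially no obstacle here, which is why the estimate is, as stated, very simple to prove. The only points that deserve a word are: intervals of $\mu$-measure zero, which are harmlessly excluded because the supremum defining $\MM^\mu$ ranges over intervals of positive $\mu$-measure; the endpoint $\gamma=0$, for which the computation above is unchanged as $\alpha>0$; and the passage from compact subsets to the open set $E_{\alpha,\gamma}$, which is immediate from the structure of open subsets of $\R$ and the local finiteness of $\mu$.
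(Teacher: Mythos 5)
Your proof is correct and follows essentially the same route as the paper: a compact exhaustion, the one-dimensional covering lemma giving overlap at most $2$, and a rearrangement of the defining average inequality on each selected interval. Your algebra is in fact slightly more direct (you isolate $\mu(I\cap E^{\mathsf c})<\tfrac{1-\alpha}{\alpha-\gamma}\mu(I\cap E)$ at once, whereas the paper solves a self-referential inequality for $\sum_k\mu(I_k\cap E^{\mathsf c})$), but the idea and the final constant are identical.
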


\begin{proof} It is enough to prove the lemma for $\gamma>0$. As in the proof of Lemma~\ref{l.weightsol1d}, given a compact $K\subseteq \{x\in\R:\, \MMw(\ind_E+\gamma\ind_{E^\mathsf{c}})(x)>\alpha\} $ there exist disjoint intervals $\{I_k\}_k$ with $\sum_k \ind_{I_k}\leq 2$, $K\subseteq \cup_k I_k$, and
	\[
\frac{1}{\mu(I_k)}\int_{I_k} f_{E,\gamma}d\mu >\alpha.
	\]
Observe that then we get $\mu(E\cap I_k)/\mu(I_k)>(\alpha-\gamma)/(1-\gamma)$.	We thus have
\[
\begin{split}
\sum_k \mu( I_k \cap E^\mathsf{c})& \leq\frac{1-\alpha}{1-\gamma} \sum_k \mu(I_k)\leq \frac{1-\alpha}{1-\gamma}\frac{1}{\alpha}\sum_k\int_{I_k} f_{E,\gamma}d\mu
\\
& \leq \frac{1}{\alpha}\frac{1-\alpha}{1-\gamma}\sum_k\big(\mu(E\cap I_k)+\gamma \mu(E^\mathsf{c}\cap I_k)\big)
\\
& \leq \frac{2}{\alpha}\frac{1-\alpha}{1-\gamma}\mu(E)+\frac{\gamma}{\alpha}\frac{1-\alpha}{1-\gamma}\sum_k \mu(I_k \cap E^\mathsf{c}).
\end{split}
\]
Since $0<\gamma<\alpha<1$ we can conclude that
\[
\sum_k \mu(I_k \cap E^\mathsf{c})\leq  \frac{2(1-\alpha)}{\alpha-\gamma} \mu(E),
\]
and thus
\[
\mu(K)\leq \big(1+ \frac{2(1-\alpha)}{\alpha-\gamma} \big) \mu(E).
\]
This proves the desired claim.
\end{proof}
As a corollary we immediately obtain a one-dimensional Solyanik estimate with respect to Borel measures.
\begin{corollary}\label{c.oned} Let $\mu$ be a non-negative locally finite Borel measure on the real line. Then for all $\alpha\in(0,1)$ we have
	\[
	\CCw(\alpha)-1\leq 2\frac{1-\alpha}{\alpha}.
	\]
\end{corollary}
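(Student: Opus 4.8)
The plan is to derive Corollary~\ref{c.oned} as an immediate specialization of the preceding lemma to the case $\gamma=0$. Since the hypothesis $0\leq\gamma<\alpha<1$ of that lemma permits $\gamma=0$ for every $\alpha\in(0,1)$, the first step is simply to set $\gamma=0$ in
\[
\mu(\{x\in\R:\, \MM ^\mu(\ind_E+\gamma\ind_{E^\mathsf{c}})(x)>\alpha\})\leq \Big(1+ 2 \Big(\frac{1-\alpha}{\alpha-\gamma}\Big)\Big)\mu(E).
\]
With $\gamma=0$ the test function $\ind_E+\gamma\ind_{E^\mathsf{c}}$ collapses to $\ind_E$, so for every measurable $E\subseteq\R$ with $0<\mu(E)<+\infty$ we obtain
\[
\mu(\{x\in\R:\, \MM ^\mu(\ind_E)(x)>\alpha\})\leq \Big(1+ \frac{2(1-\alpha)}{\alpha}\Big)\mu(E).
\]

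The second step is purely formal: divide through by $\mu(E)$ and take the supremum over all such $E$, which is exactly the definition of $\CCw(\alpha)$. This gives $\CCw(\alpha)\leq 1+2(1-\alpha)/\alpha$, that is, $\CCw(\alpha)-1\leq 2(1-\alpha)/\alpha$, as claimed.

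There is essentially no obstacle here, since the entire analytic content — the Vitali-type covering of a compact subset of the level set by intervals with controlled overlap, together with the averaging estimate $\mu(E\cap I_k)/\mu(I_k)>(\alpha-\gamma)/(1-\gamma)$ and the resulting bound on $\sum_k\mu(I_k\cap E^\mathsf{c})$ — has already been carried out in the lemma. One could alternatively reprove the corollary from scratch by running that same covering argument directly with $\gamma=0$ (it simplifies, as no reverse Hölder input is needed for general $\mu$), but the specialization above is the cleanest route.
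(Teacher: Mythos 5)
Your proof is correct and matches the paper's intent exactly: the corollary is the immediate specialization of the preceding lemma to $\gamma=0$ (which the lemma's hypothesis $0\le\gamma<\alpha<1$ explicitly allows), followed by dividing by $\mu(E)$ and taking the supremum over admissible $E$. Nothing further is needed.
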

 Observe that the previous corollary is an extension of Theorem~\ref{t.solwrtw} in the one-dimensional case. It is important to note here that the one-dimensional result is uniform over the class of Borel measures, thus strictly stronger that Theorem~\ref{t.solwrtw}. In general, Solyanik estimates do not hold for the Hardy-Littlewood maximal operator $\mathsf M ^\mu$, or the strong maximal operator $\Ms ^\mu$, defined with respect to arbitrary locally finite Borel measures $\mu$ on $\R^n$, whenever $n\geq 2$. A quick example of this type of behavior is given as follows. Let $\{S_j\}_j$ be a countable collection of sets in $\R^n$, $n\geq 2$, all of which contain the origin and such that, for each $j$ there exists $x_j\in S_j\setminus\cup_{k\neq j} S_k$. Then define the locally finite Borel measure $\mu\coloneqq\delta_0+\sum_j c_j \delta_{x_j}$ for a sequence $\{c_j\}_j$ of positive real numbers with $\lim_{j\to +\infty} c_j=0$ and $\sum_j c_j=+\infty$. If the $S_j$'s are cubes this shows that the Hardy-Littlewood maximal operator $\mathsf M^\mu$, defined with respect to $\mu$, does not satisfy any Solyanik estimate. If the $S_j$'s are rectangular parallelepipeds with sides parallel to the coordinate axes the same example shows that $\Ms ^\mu$ does not satisfy any Solyanik estimates either. In particular, these operators are unbounded on $L^p(\mu)$ for all $p<\infty$.

 The discussion above shows that extending  Corollary~\ref{c.oned} to higher dimensions will require some additional hypothesis on $\mu$. For example, the corollary is still true in $\R^n$ uniformly over all Borel measures which are tensor products of one-dimensional Borel measures as above. A less trivial generalization is contained in Theorem~\ref{t.solwrtw} which however is restricted to measures of the form $d\mu(x)=w(x)dx$ for $w\in A_\infty ^*$. On the other hand, one could consider a version of Corollary~\ref{c.oned} for the \emph{centered} Hardy-Littlewood maximal operator, or the \emph{dyadic} maximal operator defined with respect to some locally finite Borel measure $\mu$. In these cases the result easily extends to $\R^n$ and is uniform over all Borel measures as above. This is an easy consequence of the Besicovitch covering theorem, and the Calder\'on-Zygmund decomposition, respectively.

\section{Weighted Solyanik estimates for the strong maximal operator} We now move to the study of weighted Solyanik estimates for strong Muckenhoupt weights in higher dimensions. We actually prove a stronger estimate which we describe below.

Let $B\coloneqq \{\beta_1,\ldots,\beta_N\}$ be an ordered set of indices with each $\beta_j\in\{1,\ldots,n\}$. Note that we allow the case that $\beta_j=\beta_k$ for $j\neq k$. Then we define the maximal operator $\mathsf M_B$ as
\[
\mathsf M_B \coloneqq \mathsf M_{\beta_1}  \cdots \mathsf M_{\beta_N},
\]
that is, $\mathsf M_B$ is the composition of the operators $M_{\beta_N},\ldots,M_{\beta_1}$, where $\mathsf M_j$ denotes the directional maximal operator acting on the $j$-th direction of $\R^n$
\[
\mathsf M_j f(x)\coloneqq \sup_{s<x_j<t}\frac{1}{t-s}\int_s ^t |f(x_1,\ldots,x_{j-1},u,x_{j+1},\ldots,x_n)|du,\quad x\in\R^n.
\]

\begin{lemma}\label{l.iter} Let $w\in A_\infty ^*$ be a strong Muckenhoupt weight on $\R^n$ and $E\subset\R^n$ be a measurable set with $0<|E|<+\infty$. Let $\alpha_1\in(0,1)$ and $B$ as above. For $j\in\{1,\ldots,|B|-1\}$ we define $1-\alpha_{j+1}\coloneqq (1-\alpha_1)(1-\alpha_j)$. Then for all  $\alpha_1>1-4^{-4[w]_{A_\infty ^*}}$ we have
	\[
w(\{x\in\R^n:\,  \mathsf M_B \ind_E(x)>\alpha_{|B|}\})\leq (1- 4 (1-\alpha_1)^{(4[w]_{A_\infty ^*})^{-1}})^{-|B|}  w(E)\;.
	\]
\end{lemma}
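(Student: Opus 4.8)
The plan is to induct on $|B|=N$, peeling off one directional maximal operator at a time and applying the one-dimensional estimate of Lemma~\ref{l.weightsol1d} on slices. The base case $N=1$ is essentially Lemma~\ref{l.weightsol1d} with $\gamma=0$, in the variable $x_{\beta_1}$: writing $w=w_{\bar x^{\beta_1}}$ on the line through $\bar x^{\beta_1}$, the hypothesis $w\in A_\infty^*$ gives $[w_{\bar x^{\beta_1}}]_{A_\infty}\leq [w]_{A_\infty^*}$ for a.e.\ $\bar x^{\beta_1}$, so $\mathsf M_1(\ind_{E_{\bar x^{\beta_1}}})$ on each slice satisfies the distributional bound with constant $(1-4(1-\alpha_1)^{(4[w]_{A_\infty^*})^{-1}})^{-1}$; integrating in $\bar x^{\beta_1}$ (Fubini) gives the claim since the one-dimensional bound is uniform in the slice.

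For the inductive step, suppose the statement holds for ordered index sets of length $N-1$. Given $B=\{\beta_1,\dots,\beta_N\}$, write $B'=\{\beta_1,\dots,\beta_{N-1}\}$ so that $\mathsf M_B=\mathsf M_{\beta_1}\mathsf M_{B''}$ where $B''=\{\beta_2,\dots,\beta_N\}$ — actually the cleaner split, given the recursion $1-\alpha_{j+1}=(1-\alpha_1)(1-\alpha_j)$, is to set $g\coloneqq \mathsf M_{\beta_2}\cdots\mathsf M_{\beta_N}\ind_E$ and observe $\mathsf M_B\ind_E=\mathsf M_{\beta_1}g$. The key point is that $g$ is \emph{not} an indicator, but it takes values in $[0,1]$, and on the set where $\mathsf M_{\beta_2}\cdots\mathsf M_{\beta_N}\ind_E\le\alpha_{N-1}$ — wait, more carefully: one first applies the $(N-1)$-fold statement to control $w(\{g>\alpha_{N-1}\})$, then on the complementary region $g\le\alpha_{N-1}$ one has pointwise $g\le \ind_{\{g>\alpha_{N-1}\}}+\alpha_{N-1}\ind_{\{g\le\alpha_{N-1}\}}$, i.e.\ $g$ is dominated by a function of the form $\ind_F+\gamma\ind_{F^{\mathsf c}}$ with $\gamma=\alpha_{N-1}$ and $F=\{g>\alpha_{N-1}\}$. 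Then $\mathsf M_{\beta_1}g\le \mathsf M_{\beta_1}(\ind_F+\alpha_{N-1}\ind_{F^{\mathsf c}})$, and Lemma~\ref{l.weightsol1d} applied on $\beta_1$-slices (with $\alpha=\alpha_N$, $\gamma=\alpha_{N-1}$, using $(1-\alpha_N)/(1-\alpha_{N-1})=1-\alpha_1$ by the recursion, and the smallness condition $1-\alpha_1<4^{-4[w]_{A_\infty^*}}$) yields, after integrating in $\bar x^{\beta_1}$,
\[
w(\{\mathsf M_B\ind_E>\alpha_N\})\le \big(1-4(1-\alpha_1)^{(4[w]_{A_\infty^*})^{-1}}\big)^{-1}w(F)\le \big(1-4(1-\alpha_1)^{(4[w]_{A_\infty^*})^{-1}}\big)^{-N}w(E),
\]
the last inequality being the inductive hypothesis applied to $w(F)=w(\{g>\alpha_{N-1}\})$.

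The main obstacle — and the place needing care — is the interface between the two steps: $g=\mathsf M_{\beta_2}\cdots\mathsf M_{\beta_N}\ind_E$ is a genuine function, so one must check (a) that Lemma~\ref{l.weightsol1d} really does apply to $\mathsf M_{\beta_1}$ acting on $\ind_F+\gamma\ind_{F^{\mathsf c}}$ for the \emph{measurable} set $F$, which it does since that lemma is stated for arbitrary measurable $E$ (here $F$) and any $\gamma\in[0,\alpha)$; (b) that one may Fubini-slice in the $\beta_1$ direction, using that $w_{\bar x^{\beta_1}}\in A_\infty$ with constant $\le[w]_{A_\infty^*}$ uniformly, and that all the relevant sets are measurable in the product; and (c) that the index $\gamma=\alpha_{N-1}$ satisfies $1-\alpha_N<4^{-4[w]_{A_\infty^*}}(1-\alpha_{N-1})$, which is exactly the hypothesis $1-\alpha_1<4^{-4[w]_{A_\infty^*}}$ after using the recursion — so no extra smallness beyond what is assumed is needed. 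A minor subtlety worth a line: the dominating inequality $g\le\ind_F+\alpha_{N-1}\ind_{F^{\mathsf c}}$ holds pointwise everywhere (trivially on $F$ since $g\le1$, and on $F^{\mathsf c}$ since $g\le\alpha_{N-1}$ there), so passing to $\mathsf M_{\beta_1}$ and then to the distribution set is monotone and legitimate.
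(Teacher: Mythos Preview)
Your argument is correct and is essentially identical to the paper's: induction on $|B|$, with the inductive step carried out by setting $F=\{\mathsf M_{\beta_2}\cdots\mathsf M_{\beta_N}\ind_E>\alpha_{N-1}\}$, using the pointwise domination $g\le \ind_F+\alpha_{N-1}\ind_{F^{\mathsf c}}$, applying Lemma~\ref{l.weightsol1d} on $\beta_1$-slices with $(\alpha,\gamma)=(\alpha_N,\alpha_{N-1})$ and the identity $(1-\alpha_N)/(1-\alpha_{N-1})=1-\alpha_1$, and then invoking the inductive hypothesis on $w(F)$. The paper's $E_j$ is your $F$, and the three points (a)--(c) you flag are exactly the verifications the paper makes implicitly.
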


\begin{proof} Let us fix a strong Muckenhoupt weight $w\in A_\infty ^*$. For $\bar x^j\in\R^{n-1}$ we remember that the one-dimensional weight $w_{\bar x ^j}$, defined as $w_{\bar x^j}(t)\coloneqq w(x_1,\ldots,x_{j-1},t,x_{j+1},\ldots,x_n)$ for $t\in \R$, is a Muckenhoupt weight in $\R$ uniformly in $\bar x^j$; that is we have $[w_{\bar x^j}]_{A_\infty}  \leq [w]_{A_\infty ^*}$ for all $\bar x^j \in \R^{n-1}$ and $[w]_{A_\infty ^*}$ is as in Definition~\ref{d.ainfty}.

The proof is by way of induction on the size $|B|$. For $|B|=1$ we can assume without loss of generality that $B=\{1\}$; this is just for notational convenience. Let then $\bar x^1\in\R^{n-1}$ be temporarily fixed. Since $\alpha_1>1-4^{-4[w]_{A_\infty ^*}} \geq 1-4^{-4[w_{\bar x^1}]_{A_\infty}}$ we can use Lemma~\ref{l.weightsol1d} in order to estimate
\[
\begin{split}
w_{\bar x^1}(\{t\in \R:\, \mathsf M_1 \ind_E (t,\bar x^1)>\alpha_1\})&\leq (1- 4 (1-\alpha_1)^{(4[w_{\bar x^1}]_{A_\infty })^{-1}})^{-1}w_{\bar x^1}(E)
\\
& \leq (1- 4 (1-\alpha_1)^{(4[w]_{A_\infty ^*})^{-1}})^{-1}w_{\bar x^1}(E).
\end{split}
\]
Integrating over $\bar x^1\in\R^{n-1}$ we get the desired estimate for $|B|=1$.

Suppose now that
\[
w(\{x\in\R^n:\,  \mathsf M_B \ind_E(x)>\alpha_{j} \})\leq (1- 4 (1-\alpha_1)^{(4[w]_{A_\infty ^*})^{-1}})^{-j}  w(E)
\]
for all strong Muckenhoupt weights $w\in A_\infty$ and for all sets of indices $B$ with $|B|=j$. We proceed to show the corresponding the corresponding estimate for all sets of indices $B$ with $|B|=j+1$. Without loss of generality we can assume that $B=\{1,\beta_2,\ldots,\beta_{j+1}\}$. We define

\[
E_j\coloneqq \{x\in\R^n:\, \mathsf M_{\beta_2}\cdots \mathsf M_{\beta_{j+1}} \ind_E(x)>\alpha_j \}.
\]
Observe that
\[
\begin{split}
 \mathsf M _B \ind_E(x)& \leq \mathsf M_1(\ind_{E_j} \mathsf M _{\beta_2} \cdots \mathsf M_{\beta_{j+1}}  \ind_E + \ind_{E_j ^\mathsf{c}}\mathsf M _{\beta_2} \cdots \mathsf M_{\beta_{j+1}} \ind_E)(x)
\\
& \leq \mathsf M_1  (\ind_{E_j}+\alpha_j \ind_{E_j ^\mathsf{c}})(x).
\end{split}
\]

We fix $\bar x^1\in\R^{n-1}$. Since $1-\alpha_{j+1}=(1-\alpha_1)(1-\alpha_j)<4^{-4[w]_{A_\infty ^*}}(1-\alpha_j)$ we can usethe inequality above together with Lemma~\ref{l.weightsol1d} to estimate

\[
\begin{split}
w_{\bar x^1}(\{t\in\R:\, \mathsf M_ B \ind_E (t,\bar x^1)>\alpha_{j+1}\}) &\leq w_{\bar x^1}(\{t\in\R:\, \mathsf M_1  (\ind_{E_j}+\alpha_j \ind_{E_j ^\mathsf{c}}) (t,\bar x^1)>\alpha_{j+1}\})
\\
& \leq \Big(1- 4 \Big(\frac{1-\alpha_{j+1}}{1-\alpha_j}\Big)^{(4[w_{\bar x^1}]_{A_\infty })^{-1}}\Big)^{-1} w_{\bar x^1}(E_j).
\end{split}
\]

Integrating over $\bar x^1\in\R^{n-1}$ and using the inequality $[w_{x'}]_{A_\infty}\leq [w]_{A_\infty ^*}$ we get
\[
\begin{split}
w(\{x\in\R^n:\, \mathsf M_{B}\ind_E>\alpha_{j+1}\})& \leq\Big(1- 4 \Big( \frac{1-\alpha_{j+1}}{1-\alpha_j}\Big)^{(4[w]_{A_\infty ^*})^{-1}}\Big)^{-1} w(E_j)
\\
&= \big(1- 4  ( 1-\alpha_1)^{(4[w]_{A_\infty ^*})^{-1}}\big)^{-1} w(E_j)\;.
\end{split}
\]
The inductive hypothesis now implies that
\[
 w(E_j)\leq  (1- 4 (1-\alpha_1)^{(4[w]_{A_\infty ^*})^{-1}})^{-j} w(E)
\]
which together with the previous estimate completes the inductive proof of the lemma.
\end{proof}
We can now complete the proof of Theorem~\ref{t.weightedsol}.
\begin{proof}[Proof of Theorem~\ref{t.weightedsol}]Assume that $w\in A_\infty ^*$ is a strong Muckenhoupt weight. We use the elementary estimate $\Ms f \leq \mathsf M_1\cdots \mathsf M_n f$ and Lemma~\ref{l.iter} with $B\coloneqq\{1,2,\ldots,n\}$ and $\alpha_1\coloneqq 1-(1-\alpha)^\frac{1}{n}$ to conclude
	\[
	w(\{x\in\R^n:\, \Ms\ind_E(x)>\alpha\})\leq \big(1- 4 (1-\alpha)^{(4n[w]_{A_\infty ^*})^{-1}}\big)^{-n} w(E)
	\]
for $\alpha>1-e^{-4(\ln 4)n [w]_{A_\infty ^*} }$. It follows that
\[
\Cs(\alpha)-1\lesssim_n (1-\alpha)^{ (4 n [w]_{A_\infty ^*})^{-1}} \quad\text{for all}\quad \alpha>1-e^{-4 (\ln 8) n[w]_{A_\infty ^*}}
\]
and the implied constant depends only upon dimension. The optimality part of the theorem follows immediately by Lemma~\ref{l.fromsol}.
\end{proof}

\section{Some applications of weighted Solyanik estimates}\label{s.apps} In this section we present some applications of the multiparameter weighted Solyanik estimates of Theorem~\ref{t.weightedsol}. These show that Solyanik estimates become a very natural and useful tool in the theory of weighted norm inequalities. An underlying principle, which is due to the multiparameter nature of the weights involved, is that we can many times reduce to the problem under study to a one dimensional one and then lift it again to higher dimensions.

\subsection{A reverse H\"older inequality for \texorpdfstring{$A_\infty ^*$}{Ainfty*}}\label{s.rhi} As a corollary of the weighted multiparameter Solyanik estimate we get, rather unexpectedly, a reverse H\"older inequality for multiparameter Muckenhoupt weights. This is the content of Theorem~\ref{t.rhi} which we now prove.
\begin{proof}[Proof of Theorem~\ref{t.rhi}] As $w\in A_\infty ^*$, Theorem~\ref{t.weightedsol} implies that $w$ satisfies the Solyanik estimate
\[
 \Cs(\alpha)-1 \lesssim_n (1-\alpha)^{(cn[w]_{A_\infty ^*})^{-1}}\quad \text{for all}\quad \alpha>1-e^{-c n [w]_{A_\infty ^*}},
\]
where $c>0$ is a numerical and the implied constant depends only on the dimension $n$. Thus Lemma~\ref{l.fromsol} implies that for every rectangular parallelepiped $R\subseteq \R^n$ we have
\[
 \Big(\frac{1}{|R|}\int_R w^r \Big)^\frac{1}{r} \lesssim_n \big(1-(r-1)(cn[w]_{A_\infty ^*} -1)\big)^{-\frac{1}{r}}\frac{1}{|R|}\int_R w
\]
for all $r<1+\frac{1}{cn[w]_{A_\infty ^*}-1}$. The optimality of the exponents up to dimensional constants follows from Lemma~\ref{l.fromsol}.
\end{proof}

The reader may appreciate that the exponent in the reverse H\"older inequality provided in Theorem~\ref{t.rhi}, $1+(cn[w]_{A_\infty ^*}-1)^{-1}$, is in terms of the essentially one-dimensional $A_\infty ^*$-constant from Definition~\ref{d.ainfty}, and represents an improvement over a more typical  reverse H\"older exponent  given in terms of the $A_\infty$-constant $[w]_{A_{\infty}^{\mathsf{rec, H}}}$ associated to the Hru\v{s}\v{c}ev constant, \cite{Hru}, defined by
\[
[w]_{A_{\infty}^{\mathsf{rec,H}}} \coloneqq \lim_{p \rightarrow \infty}[w]_{A_p ^\mathsf{rec}}=\sup_R \Big(\frac{1}{|R|}\int_R w\Big)\exp\Big(\frac{1}{|R|}\int_R \log w^{-1}\Big),
\]
where the supremum is taken over rectangular parallelepipeds in $\R^n$ with sides parallel to the coordinate axes. To see this improvement, let $[\nu]_{A_{\infty}^ \mathsf{H}}$ denote the Hru\v{s}\v{c}ev constant of a weight $\nu$ on $\R^1$; note that for a.e. $\bar x^j \in \R^{n-1}$ we have
\[
[w_{\bar x^j}]_{A_{\infty}^{\mathsf H}} \leq [w]_{ A_\infty  ^\mathsf{rec,H} } .
\]
Furthermore, as was shown in \cite{HytP}, the Fujii-Wilson constant of a weight on $\mathbb{R}^1$ is bounded above by a constant times the Hru\v{s}\v{c}ev constant of the weight. Thus for a.e. $\bar x^j\in\R^{n-1}$ we get $[w_{\bar x^j}]_{A_\infty}\lesssim [w_{\bar x^j}]_{A_\infty ^\mathsf{H}}\leq [w]_{ A_\infty  ^\mathsf{rec,H} } $ so that $[w]_{A_\infty ^*}\lesssim [w]_{A_\infty ^\mathsf{rec,H}}$.

One can argue in a similar fashion and relate Solyanik estimates to reverse H\"older inequalities for $A_p ^*$ weights when $p\in (1,\infty)$. Note however that, while the sharp reverse H\"older inequalities for multiparameter $A_1 ^*$ weights are known from \cite{kint} to hold with exponents and constants independent of the dimension, this can never be captured by Solyanik estimates. Indeed, in the unweighted case we have that $\mathsf{C}_{\mathsf S}(\alpha)-1\eqsim_n (1-\alpha)^\frac{1}{n}$. Thus the dependence on the dimension appearing in the weighted Solyanik estimates of Theorem~\ref{t.weightedsol} is essentially optimal and no dimension free reverse H\"older inequalities can be produced with the methods of this paper.

As another corollary of Theorem~\ref{t.weightedsol} and Lemma~\ref{l.fromsol} we obtain
\begin{corollary}\label{c.homa*} Let $w\in A_\infty ^*$. There exists a numerical constant $c>0$ and a dimensional constant $c_n>0$ such that, for all rectangular parallelepipeds $R\subseteq \R^n$ and all measurable sets $E\subseteq R$ we have
\[
 \frac{w(E)}{w(R)} \leq c_n \Big(\frac{|E|}{|R|}\Big)^{(cn[w]_{A_\infty ^*})^{-1}}.
\]
\end{corollary}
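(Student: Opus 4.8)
The plan is to read the corollary off directly from Theorem~\ref{t.weightedsol} and part (i) of Lemma~\ref{l.fromsol}. Since $w\in A_\infty^*$, Theorem~\ref{t.weightedsol} provides a numerical constant $c>0$ such that
\[
\Cs(\alpha)-1\lesssim_n (1-\alpha)^{(cn[w]_{A_\infty^*})^{-1}}\qquad\text{for all}\qquad 1>\alpha>1-e^{-cn[w]_{A_\infty^*}},
\]
with an implied constant depending only on $n$; call this implied constant $B_n$, and assume without loss of generality $B_n\geq 1$. Thus the hypothesis of Lemma~\ref{l.fromsol} is met with the choices $B\coloneqq B_n$, $\beta\coloneqq cn[w]_{A_\infty^*}$ and $\gamma\coloneqq cn[w]_{A_\infty^*}$.

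Applying part (i) of Lemma~\ref{l.fromsol} with these parameters gives, for every rectangular parallelepiped $R\subseteq\R^n$ and every measurable $E\subseteq R$,
\[
\frac{w(E)}{w(R)}\leq \max\!\big(B_n,\,e^{\gamma/\beta}\big)\Big(\frac{|E|}{|R|}\Big)^{1/\beta}.
\]
The only point requiring a moment's attention is the bookkeeping of the constants: because $\gamma=\beta$ we have $e^{\gamma/\beta}=e$, a purely numerical constant, so $c_n\coloneqq\max(B_n,e)$ depends on the dimension alone, while $1/\beta=(cn[w]_{A_\infty^*})^{-1}$. This is exactly the asserted inequality, and no genuine obstacle arises; the content has already been done in proving Theorem~\ref{t.weightedsol}.

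If one prefers an argument not routed through Lemma~\ref{l.fromsol}, the same estimate can be obtained essentially in one line: for a rectangle $R$ and $S\subseteq R$ with $|S|/|R|>\alpha$ one has $R\subseteq\{x\in\R^n:\ \Ms(\ind_S)(x)>\alpha\}$, hence $w(R)\leq\Cs(\alpha)\,w(S)$; writing $E\coloneqq R\setminus S$, letting $\alpha\to 1-|E|/|R|$ and invoking Theorem~\ref{t.weightedsol} yields the bound whenever $|E|/|R|<e^{-cn[w]_{A_\infty^*}}$, while for $|E|/|R|\geq e^{-cn[w]_{A_\infty^*}}$ the trivial estimate $w(E)/w(R)\leq 1\leq e\,(|E|/|R|)^{(cn[w]_{A_\infty^*})^{-1}}$ completes the proof.
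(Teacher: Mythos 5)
Your proof is correct and matches the paper's intended derivation: the paper states the corollary follows from Theorem~\ref{t.weightedsol} together with Lemma~\ref{l.fromsol}, and your application of part~(i) of that lemma with $B=B_n$, $\beta=\gamma=cn[w]_{A_\infty^*}$ (noting $\beta\geq 1$ since $[w]_{A_\infty^*}\geq 1$ and one may take $c\geq1$, and $e^{\gamma/\beta}=e$) is exactly the step the authors have in mind. Your alternative direct argument is also fine; it simply inlines the proof of Lemma~\ref{l.fromsol}(i).
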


\subsection{Embedding of \texorpdfstring{$A_\infty ^*$}{Ainfty*} into \texorpdfstring{$A_p ^*$}{Ap*}} The connection between Solyanik estimates and quantitative embeddings of $A_\infty ^*$ into the classes $A_p ^*$ was first presented in \cite{hp2}. Here we present the analogous result for multiparameter weights.

\begin{theorem} There exists some numerical constant $c>0$ such that, for all strong Muckenhoupt weights $w\in A_\infty ^*$ we have $w\in A_p ^*$ for all $p\geq e^{c[w]_{A_\infty^*}}$ and $[w]_{A_p ^*} \leq e^{e^{cp[w]_{A_\infty ^*}}}$.
\end{theorem}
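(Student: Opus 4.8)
The plan is to reduce the statement to one variable and then to invoke the quantitative embedding of $A_\infty$ into $A_p$ on the real line, established in the one-parameter setting in \cite{hp2}. Throughout write $\langle g\rangle_I\coloneqq\frac1{|I|}\int_I g$ and $A\coloneqq[w]_{A_\infty^*}$. By Definition~\ref{d.ainfty}, $[w]_{A_p^*}=\sup_{1\le j\le n}\esssup_{\bar x^j}[w_{\bar x^j}]_{A_p}$ for every $p\in[1,\infty]$, and in particular $[w_{\bar x^j}]_{A_\infty}\le A$ for every $j$ and a.e.\ $\bar x^j\in\R^{n-1}$. Hence it suffices to establish the one-dimensional assertion: \emph{there is a numerical $c>0$ such that every weight $\nu$ on $\R$ with $[\nu]_{A_\infty}\le A$ belongs to $A_p$ for all $p\ge e^{cA}$, with $[\nu]_{A_p}\le e^{e^{cpA}}$}. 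Granting this, the one-dimensional result applied to the slices $\nu=w_{\bar x^j}$ — legitimate because the threshold $e^{cA}$ and the bound $e^{e^{cpA}}$ are increasing in $A$, so the uniform estimate $[w_{\bar x^j}]_{A_\infty}\le A$ produces a uniform conclusion for all $p\ge e^{cA}$ — followed by taking the supremum in $j$ and the essential supremum in $\bar x^j$, yields $w\in A_p^*$ with $[w]_{A_p^*}\le e^{e^{cp[w]_{A_\infty^*}}}$.

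The content of the one-dimensional assertion is to convert the Fujii--Wilson control $[\nu]_{A_\infty}\le A$ into a \emph{reverse} concentration estimate
\[
\frac{|E|}{|I|}\le C\Bigl(\frac{\nu(E)}{\nu(I)}\Bigr)^{\delta}\qquad\text{for all intervals }I\text{ and all measurable }E\subseteq I,
\]
with $\delta$ of order $e^{-cA}$ and $C$ of order $e^{cA}$; this is precisely the one-parameter ingredient of \cite{hp2}, and its proof genuinely uses that the Fujii--Wilson condition is imposed on \emph{every} interval — a concentration estimate for $\nu$ itself, of the type furnished by Lemma~\ref{l.rhi}(i) or Lemma~\ref{l.fromsol}(i), is not enough here, since it says nothing about the sets on which $\nu$ is small. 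Granting the reverse estimate, I would fix $I$, normalise so that $\langle\nu\rangle_I=1$, and use the layer-cake formula: for $\mu>0$,
\[
\langle\nu^{-\mu}\rangle_I=1+\mu\int_0^1\frac{\bigl|\{x\in I:\nu(x)<t\}\bigr|}{|I|}\,t^{-\mu-1}\,dt .
\]
Applying the reverse estimate to $E=\{x\in I:\nu(x)<t\}$, for which $\nu(E)\le t|E|$ and hence $\tfrac{\nu(E)}{\nu(I)}\le t\tfrac{|E|}{|I|}$, gives $\tfrac{|E|}{|I|}\le C^{1/(1-\delta)}t^{\delta/(1-\delta)}$, so the integral above is at most $C^{1/(1-\delta)}\mu\bigl(\tfrac{\delta}{1-\delta}-\mu\bigr)^{-1}$. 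Choosing $\mu$ of order $\delta\,C^{-1/(1-\delta)}\approx e^{-cA}$ makes this $\le1$, whence $\langle\nu^{-\mu}\rangle_I\le2$; undoing the normalisation, $\langle\nu^{-\mu}\rangle_I\le2\langle\nu\rangle_I^{-\mu}$ for every $I$. Setting $p_0\coloneqq1+1/\mu$, which is of order $e^{cA}$, this is exactly the statement $\nu\in A_{p_0}$ with $[\nu]_{A_{p_0}}=\sup_I\langle\nu\rangle_I\langle\nu^{-\mu}\rangle_I^{1/\mu}\le2^{1/\mu}\le e^{e^{cA}}$.

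Finally, the full range $p\ge e^{cA}$ follows from the standard monotonicity $A_q\subseteq A_p$ with $[\nu]_{A_p}\le[\nu]_{A_q}$ for $q\le p$, so that $[\nu]_{A_p}\le[\nu]_{A_{p_0}}\le e^{e^{cA}}\le e^{e^{cpA}}$ whenever $p\ge p_0=e^{cA}$; reassembling the slices via the identity for $[w]_{A_p^*}$ from the first paragraph then finishes the argument. The step I expect to be the main obstacle is the reverse concentration estimate: extracting two-sided information about $\nu$ from the one-sided Fujii--Wilson bound, equivalently proving that $\nu^{-1/(p-1)}$ becomes uniformly integrable over intervals once $p$ is taken exponentially large in $[\nu]_{A_\infty}$. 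The remaining steps are routine manipulations with the slicing characterisation of $A_p^*$ and with elementary distributional estimates.
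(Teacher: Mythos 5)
Your overall scaffolding — slice $w$ to one-dimensional weights $w_{\bar x^j}$, prove a one-dimensional embedding $A_\infty \hookrightarrow A_p$ with uniform constants, then reassemble using $[w]_{A_p^*}=\sup_j\esssup_{\bar x^j}[w_{\bar x^j}]_{A_p}$ — matches the paper exactly. The divergence, and the gap, is in how the one-dimensional embedding is proved. You assume a \emph{reverse} concentration estimate $\tfrac{|E|}{|I|}\leq C\bigl(\tfrac{\nu(E)}{\nu(I)}\bigr)^\delta$ with $\delta\approx e^{-cA}$, $C\approx e^{cA}$, and then give a clean layer-cake argument deriving $[\nu]_{A_{p_0}}\leq 2^{1/\mu}$ for $p_0=1+1/\mu$. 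That layer-cake computation is correct and, in fact, tidier than what the paper does. But the reverse concentration estimate is precisely equivalent to the $A_{p_0}$ condition you are trying to establish (the implication $\nu\in A_{p_0}\Rightarrow\tfrac{|E|}{|I|}\leq[\nu]_{A_{p_0}}^{1/p_0}\bigl(\tfrac{\nu(E)}{\nu(I)}\bigr)^{1/p_0}$ is immediate from H\"older, and your argument shows the converse). So as written the proposal does not supply the crux; you have moved the difficulty into an unproved black box and labelled it the ``main obstacle,'' which you yourself flag.

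For comparison, the paper's actual route is longer but self-contained: it uses the one-dimensional weighted Solyanik estimate (Lemma~\ref{l.weightsol1d}) to fix $\alpha_o\eqsim 1-e^{-c[w]_{A_\infty^*}}$ with $\mathsf{C}_{1,w_{\bar x^j}}(\alpha_o)\leq 1+8^{-2}$, invokes a result from \cite{hlp} turning a Tauberian bound at one level $\alpha_o$ into a restricted weak type $(p_o,p_o)$ estimate for $\MM$ on $L^{p_o}(w_{\bar x^j})$ with $p_o\eqsim e^{c[w]_{A_\infty^*}}$, upgrades to strong type by restricted weak type interpolation against the trivial $L^\infty$ bound (and then a Riesz--Thorin step to tame the constant), and finally converts the maximal operator norm into an $A_p$ bound via Muckenhoupt's elementary lower bound $\|\MM\|_{L^p(\upsilon)\to L^p(\upsilon)}\gtrsim[\upsilon]_{A_p}^{1/p}$. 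None of these steps requires a reverse concentration estimate, and indeed none of the lemmas in the paper (Lemma~\ref{l.rhi}(i), Lemma~\ref{l.fromsol}(i), Corollary~\ref{c.homa*}) furnish one: they all control $\nu(E)/\nu(I)$ by $|E|/|I|$, which, as you observe yourself, is the wrong direction. Unless you can independently establish the reverse estimate with the stated $e^{\pm cA}$ quantification — and I see no way to do so without, in effect, reproving the theorem — the proposal does not close.
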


\begin{proof}
We begin by fixing some weight $w\in A_\infty ^*$, $j\in\{1,\ldots,n\}$ and $\bar x^j \in \R^{n-1}$. Then for a.e. $\bar{x}^j\in\R^{n-1}$ the weight $w_{\bar x ^j}$ is an $A_\infty$ weight on the real line, uniformly in $\bar x^j$. By Lemma~\ref{l.weightsol1d} for $\gamma=0$, which is the one-dimensional version of Theorem~\ref{t.weightedsol}, we have the Solyanik estimate
\[
 \mathsf C_{1,w_{\bar x^j}} (\alpha)-1\leq 8(1-\alpha)^{(4[w_{\bar x^j}]_{A_\infty})^{-1}}\quad\text{whenever}\quad 1 > \alpha>1-e^{-4(\ln 8)[w_{\bar x^j}]_{A_\infty}}.
\]
Since $[w_{\bar x^j}]_{A_\infty} \leq [w]_{A_\infty ^*}$ for a.e. $ \bar x^j$ we get
\[
 \mathsf C_{1,w_{\bar x^j}} (\alpha)-1\leq 8(1-\alpha)^{(4[w]_{A_\infty ^*})^{-1}}\quad\text{whenever}\quad 1 > \alpha>1-e^{-4(\ln 8)[w]_{A_\infty ^*}},
\]
uniformly, for a.e. $\bar x^j$. Setting $\alpha_o\coloneqq 1-e^{-8(\ln 8) [w]_{A_\infty ^*}}$ we finally conclude that $ \mathsf C_{1,w_{\bar x^j}} (\alpha_o)\leq 1+8^{-2}$, uniformly for a.e. $\bar x^j$. Now a close examination of the proof of \cite{hlp}*{Theorem 6.1} shows that for every measurable set $E\subseteq \R$ we have
\[
\begin{split}
& w_{\bar x^j}(\{x \in \R :\, \MM \ind_E(x)>\lambda)\})
\\
&\qquad\qquad \leq \exp{\bigg[\log \mathsf C_{1,w_{\bar x^j}}(\alpha_o) \bigg( \bigg\lceil \frac{-\log\frac{\alpha_o}{\lambda}}{\log \alpha_o}  \bigg \rceil \bigg\lceil 2+ \frac{\log^+ (2\alpha_o)}{\log 1/\alpha_o} \bigg\rceil +1 \bigg)\bigg]}w_{\bar x^j}(E),
\end{split}
\]
where $\lceil x \rceil$ denotes the smallest positive integer which is no less than x. Thus for a.e. $\bar x^j\in \R^{n-1}$ we have that
\[
 w_{\bar x^j}(\{x\in\R:\, \MM  \ind_E(x)>\lambda\})\leq C_{1,w_{\bar x^j}}(\alpha_o)\frac{w(E)}{\lambda^{p_o}}\lesssim \frac{w(E)}{\lambda^{p_o}},
\]
where $p_o= \log (C_{1,w_{\bar x^j}}(\alpha_o)) e^{c[w]_{A_\infty ^*}}$ for some numerical constant $c>0$. However, this means that $\MM$ is of restricted weak type $(p_o,p_o)$ with respect to $w_{\bar x^j}$, uniformly for a.e. $\bar x^j$. By restricted weak type interpolation we conclude that $\MM$ maps $L^p(w_{\bar x^j})$ to itself with
\[
 \|\MM\|_{L^p(w_{\bar x^j})\to L^p(w_{\bar x^j})} \leq 2 \frac{p^\frac{1}{p}(C_{1,w_{\bar x^j}}(\alpha_o))^\frac{p_o}{p}}{(p-p_o)^\frac{1}{p}}
\]
for $p>p_o$. From this we conclude that $\|\MM\|_{L^{2p_o}(w_{\bar x^j})\to L^{2p_o}(w_{\bar x^j}) }\leq 4(C_{1,w_{\bar x^j}}(\alpha_o))^\frac{1}{2}.$
Now Riesz-Thorin interpolation, applied to a linearization of $\MM$ gives the bound
\[
 \|\MM\|_{L^{p}(w_{\bar x^j})\to L^{p}(w_{\bar x^j}) }\leq 4^\frac{2p_o}{p} (C_{1,w_{\bar x^j}}(\alpha_o))^\frac{p_o}{p}.
\]
for $p>2p_o$. We now remember the lower bound
\[
\|\MM\|_{L^p(\upsilon)\to L^{p}(\upsilon)}\gtrsim_p [\upsilon]_{A_p} ^\frac{1}{p},\quad p\in(1,\infty),
\]
valid for all one-dimensional weights $\upsilon\in A_p$. This is a simple consequence of the definition of the $A_p$-constant; the details are in \cite{Muck}. We conclude that $w_{\bar x^j }\in A_p$ for all $p>e^{c[w]_{A_\infty ^*}}$ and $[w_{\bar x^j}]_{A_p} \lesssim_p \exp(\exp (c[w]_{A_\infty ^*}))$ for some numerical constant $c>0$. Since these bounds are uniform in $j\in\{1,\ldots,n\}$ and $\bar x^j \in \R^{n-1}$ this concludes the proof of the theorem.
\end{proof}

\subsection{A weighted covering lemma for rectangles} We close the discussion on applications of weighted Solyanik estimates by providing a covering lemma for rectangles in $\R^n$ under the presence of $A_p ^*$-weights. This is an immediate application of our results. The formulation that follows might moreover turn out to be useful for future reference. Note that the statement of the corollary is given with respect to the ``rectangular'' $A_p ^*$-constants, $[w]_{A_p ^\mathsf{rec}}$.
\begin{corollary}\label{c.weightedcover} Let $\{R_j\}_{j=1} ^N$ be a finite collection of rectangular parallelepipeds in $\R^n$ whose sides are parallel to the coordinate axes, $w\in A_p ^*$ for some $p\in[1,\infty)$ be a strong Muckenhoupt weight in $\R^n$, and $\delta\in(0,e^{-cn[w]_{A_p ^\mathsf{rec}}})$ be a parameter. There exists a subcollection $\{\tilde R_k\}_{k=1} ^M\subseteq \{R_j\}_{j=1} ^N$, such that
\begin{itemize}
 \item[(i)] We have
\[
 w\big(\bigcup_j R_j \big) \leq \big(1+c_n\delta^{(cn[w]_{A_p ^\mathsf{rec}})^{-1}}\big) w\big(\bigcup_k \tilde R_k\big).
\]
 \item[(ii)] The rectangles in the collection $\{\tilde R_k\}_k$ are sparse in the sense that
\[
 \sum_k w(\tilde R_k) \leq\frac{[w]_{A_p ^\mathsf{rec} } }{\delta^p}w\big(\bigcup_k \tilde R_k\big)\;.
\]
\end{itemize}
Here $c>0$ is a numerical constant and $c_n>0$ depends only on the dimension.
\end{corollary}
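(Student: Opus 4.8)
The plan is to run a greedy C\'ordoba--Fefferman type selection and then control the $w$-measure of the part of $\bigcup_j R_j$ missed by the selection \emph{globally}, through the weighted Solyanik estimate of Theorem~\ref{t.weightedsol}, rather than by a naive sum over the $R_j$'s. Concretely, examine the rectangles in any fixed order $R_1,\dots,R_N$ and build the subcollection $\{\tilde R_k\}$ as follows: include $R_j$ precisely when $|R_j\setminus \Omega_j|>\delta|R_j|$, where $\Omega_j$ is the union of the rectangles selected strictly before $R_j$ is examined; write $\Omega\coloneqq \bigcup_k\tilde R_k$. Since $\delta<1$ the first rectangle is always selected, so $\Omega\neq\emptyset$; and because $A_p^*$ weights are positive a.e.\ and $\Omega$ is a bounded set, $0<w(\Omega)<+\infty$.

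For (ii), set $E_k\coloneqq \tilde R_k\setminus\bigcup_{i<k}\tilde R_i$. By the selection rule $|E_k|>\delta|\tilde R_k|$, the sets $E_k$ are pairwise disjoint, and $\bigcup_k E_k=\Omega$. The elementary inequality
\[
 \Big(\frac{|E|}{|R|}\Big)^p\leq [w]_{A_p ^\mathsf{rec}}\,\frac{w(E)}{w(R)}\qquad\text{for measurable }E\subseteq R,
\]
which is a one-line application of H\"older's inequality and the definition of $[w]_{A_p ^\mathsf{rec}}$ (and holds also for $p=1$ with the usual $A_1$ convention), applied with $R=\tilde R_k$ and $E=E_k$, yields $w(\tilde R_k)\leq \delta^{-p}[w]_{A_p ^\mathsf{rec}}\,w(E_k)$. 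Summing over $k$ and using disjointness of the $E_k$ gives $\sum_k w(\tilde R_k)\leq \delta^{-p}[w]_{A_p ^\mathsf{rec}}\,w(\Omega)$, which is exactly (ii); note that the ordering of the $R_j$ played no role here.

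For (i), observe that every $R_j$ satisfies $|R_j\setminus\Omega|\leq \delta|R_j|$: this is trivial if $R_j$ was selected, since then $R_j\subseteq\Omega$, and if $R_j$ was rejected then $|R_j\setminus\Omega_j|\leq\delta|R_j|$ while $\Omega_j\subseteq\Omega$. Hence $|R_j\cap\Omega|/|R_j|\geq 1-\delta$ for every $j$, so $\Ms(\ind_\Omega)(x)\geq 1-\delta$ for every $x\in\bigcup_j R_j$, and in particular
\[
 \bigcup_j R_j\subseteq\{x\in\R^n:\,\Ms(\ind_\Omega)(x)>1-2\delta\}.
\]
Now $w\in A_p^*\subseteq A_\infty^*$ with $[w]_{A_\infty^*}\lesssim [w]_{A_p^*}\leq [w]_{A_p ^\mathsf{rec}}$, the first bound coming from the standard comparison of the Fujii--Wilson gauge with the $A_p$ gauge on each one-dimensional slice (see \cite{HytP}) and the second from Lemma~\ref{l.dir}. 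Choosing the numerical constant $c$ in the hypothesis $\delta<e^{-cn[w]_{A_p ^\mathsf{rec}}}$ large enough guarantees $2\delta<e^{-c'n[w]_{A_\infty^*}}$, so Theorem~\ref{t.weightedsol} applies at the level $\alpha\coloneqq 1-2\delta$ and gives, via the definition of $\Cs$,
\[
 w\Big(\bigcup_j R_j\Big)\leq \Cs(1-2\delta)\,w(\Omega)\leq \big(1+c_n(2\delta)^{(c'n[w]_{A_\infty^*})^{-1}}\big)w(\Omega).
\]
Since the exponent is at most $1$ we absorb the factor $2^{(c'n[w]_{A_\infty^*})^{-1}}\leq 2$ into $c_n$, and since $\delta<1$ we may replace $[w]_{A_\infty^*}$ in the exponent by the larger $[w]_{A_p ^\mathsf{rec}}$ (which only increases the bound), obtaining (i) after renaming constants.

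The argument is essentially bookkeeping once Theorem~\ref{t.weightedsol} is in hand, and the one conceptual point — isolated in the first paragraph — is that the rejected part $\bigcup_j R_j\setminus\Omega$ should not be estimated by $\sum_j w(R_j\setminus\Omega)$, a sum with uncontrollably many terms, but rather absorbed into $\{\Ms(\ind_\Omega)>1-2\delta\}$ and handled by the weighted Solyanik estimate for the single set $\Omega$. The only technical care points are the harmless passage from the slice-wise Fujii--Wilson gauge $[w]_{A_\infty^*}$ of Theorem~\ref{t.weightedsol} to the rectangular gauge $[w]_{A_p ^\mathsf{rec}}$ appearing in the statement, and the loss of a factor $2$ incurred in turning the closed condition $\Ms(\ind_\Omega)\geq 1-\delta$ into the open one used to define $\Cs$.
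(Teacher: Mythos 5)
Your proof is correct and follows essentially the same route as the paper's: a one-pass Córdoba--Fefferman greedy selection against the running union, part (ii) from the disjoint increments $E_k$ and the pointwise $A_p^\mathsf{rec}$ estimate, and part (i) by absorbing the rejected rectangles into a sublevel set of $\Ms(\ind_\Omega)$ and invoking Theorem~\ref{t.weightedsol} together with the comparison $[w]_{A_\infty^*}\leq[w]_{A_p^\mathsf{rec}}$. The only (harmless) cosmetic divergence is that your strict-inequality selection rule yields the non-strict bound $\Ms(\ind_\Omega)\geq 1-\delta$, which you correctly convert into the open condition at level $1-2\delta$ and then absorb the factor $2$ into the numerical constants; the paper's non-strict selection rule gives the strict inequality directly, making that small adjustment unnecessary.
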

\begin{proof} We perform the standard C\'ordoba-Fefferman selection algorithm from \cite{CF}. Thus we define $\tilde R_1\coloneqq R_1$ and let us assume that we have chosen $\tilde R_1,\ldots,\tilde R_j\eqqcolon R_J$. We then choose $\tilde R_{j+1}$ to be the first rectangle $R$ among the ones in the list $\{R_{J+1},\ldots,R_N\}$ that satisfies
\[
 \big|R\cap \bigcup_{\ell\leq j} \tilde R_j\big|\leq (1-\delta) |R|\;.
\]
If no such rectangle exists the selection algorithm terminates. Suppose now that $R\in\{R_j\}_j$ were not selected. Then there exists $k\leq M$ such that
\[
 \big|R\cap \bigcup_{\ell\leq k} \tilde R_\ell\big|>(1-\delta)|R|
\]
and thus
\[
 \bigcup_{j=1} ^N R_j \subseteq\big\{ x\in\R^n:\, \Ms(\ind_{\cup_k \tilde R_k})(x)>1-\delta\big\}.
\]
Now since for one dimensional weights we have $[w]_{A_\infty}\leq [w]_{A_p}$, see \cite{HytP}, it follows that $[w]_{A_\infty ^*}\leq [w]_{A_p ^\mathsf{rec}}$ for all $p\in[1,\infty)$. Therefore, for $\delta < e^{-cn[w]_{A_p ^\mathsf{rec}}} \leq e^{-cn[w]_{A_\infty ^\mathsf{rec}}}$ we have by Theorem~\ref{t.weightedsol}
\[
 w\big(\bigcup_j R_j \big) \leq \big(1+c_n \delta^{(cn[w]_{A_\infty ^*})^{-1}} \big)w\big(\bigcup_k \tilde R_k\big)\leq \big(1+c_n \delta^{(cn[w]_{A_p ^\mathsf{rec}})^{-1}} \big)w\big(\bigcup_k \tilde R_k\big),
\]
hence the proof of (i) is complete.

Now we define the increments $\tilde E_0\coloneqq \tilde R_0$ and $\tilde E_k \coloneqq \tilde R_k \setminus \cup _{\ell<k} \tilde R_\ell$ so that the $\tilde E_k$'s are disjoint and $\cup_k \tilde R_k=\cup_k \tilde E_k$. Note that the selection algorithm guarantees that $|  \tilde E_k|\geq \delta |	\tilde R_k|$. Since $w\in A_p ^*$ we also have
\[
 \delta^p\leq \Big(\frac{|\tilde E_k|}{|\tilde R_k|}\Big)^p \leq [w]_{A_p ^\mathsf{rec}} \frac{w(\tilde E_k)}{w(\tilde R_k)}.
\]
Thus
\[
 \sum_j w(\tilde R_k)\leq \frac{[w]_{A_p ^\mathsf{rec} } }{\delta^p} \sum_k w(\tilde E_k) = \frac{[w]_{A_p ^\mathsf{rec} } }{\delta^p} w\big(\bigcup_k \tilde R_k \big)
\]
as desired.
\end{proof}

\section{Solyanik estimates with respect to weights}
In this section we give the proof of Theorem~\ref{t.solwrtw}. The idea of the proof is very simple and bypasses all the problems that can be caused by the fact that, in the definition of $\Mw$, the presence of $w$ couples the variables making it technically hard to develop inductive arguments as the one in the proof of Theorem~\ref{t.weightedsol}. An inductive proof for the bound $\Mw:L^p(w)\to L^p(w)$ is however possible. See for example \cite{F} and \cite{LOSH}. Here we adopt a different approach and use the hypothesis $w\in A_\infty ^*$ in order to obtain Solyanik estimates for $\Mw$ by the weighted Solyanik estimates for $\Ms$.
\begin{proof}[Proof of Theorem~\ref{t.solwrtw}] Let $\alpha\in(0,1)$, $w\in A_\infty ^*$, and let $x\in E_\alpha\coloneqq \{x\in\R^n:\, \Mw(\ind_E)(x)>\alpha\}$. There exists a rectangular parallelepiped $R_x$ such that $w(R_x\cap E)/w(R_x)>\alpha$ and $x\in R_x$. Since $w\in A_\infty ^*$ there exists $1\leq p_o<+\infty$ such that $w\in A_{p_o} ^*$. Then $w$ has the property that for each rectangular parallelepiped $R\subseteq \R^n$ and each measurable $A\subseteq R$ we have
\[
 \Big(\frac{|A|}{|R|}\Big)^{p_o} \leq [w]_{A_{p_o} ^{\mathsf{rec}}} \frac{w(A)}{w(R)}.
\]
Thus for each measurable $S\subset R$ we have
\[
 \frac{|S|}{|R|}\geq 1 - [w]_{A_{p_o} ^\mathsf{rec}} ^\frac{1}{p_o} \Big(1-\frac{w(S)}{w(R)}\Big)^\frac{1}{p_o}.
\]
Applying the inequality for $S\coloneqq R_x\cap E\subseteq R_x$ we can conclude
\[
  \frac{|E \cap R_x | }{|R_x|} \geq 1 -[w]_{A_{p_o} ^\mathsf{rec}} ^\frac{1}{p_o}(1-\alpha)^\frac{1}{p_o}\quad \text{for}\quad 1 > \alpha >1 -\frac{1}{[w]_{A_{p_o} ^\mathsf{rec}}}.
\]
Thus
\[
 E_\alpha \subseteq \big\{x\in\R^n:\, \Ms(\ind_E)(x)>1 -[w]_{A_{p_o} ^\mathsf{rec}} ^\frac{1}{p_o}(1-\alpha)^\frac{1}{p_o}\big\}
\]
and using Theorem~\ref{t.weightedsol} we get
\[
\begin{split}
 \Cw(\alpha)-1 & \leq \Cs\big(1- [w]_{A_{p_o} ^\mathsf{rec}} ^\frac{1}{p_o}(1-\alpha)^\frac{1}{p_o}\big)-1 \lesssim_n \big([w]_{A_{p_o} ^\mathsf{rec}} ^\frac{1}{p_o}(1-\alpha)^\frac{1}{p_o}\big)^{(c n[w]_{A_\infty ^*})^{-1}}
\\
& \lesssim_{w,n} (1-\alpha)^\frac{1}{c_{w,n}}
\end{split}
\]
for some $c_{w,n}>1$, as long as $\alpha$ is sufficiently close to $1$, depending only on $w$ and $n$.
\end{proof}

\section*{Acknowledgment} We would like to thank Michael Lacey for bringing to our attention the connection of multiparameter Solyanik estimates with versions of Journ\'e's lemma and the characterization of product \textsf{BMO} in terms of commutators, thus motivating several of the questions addressed in the present paper.
\begin{bibsection}
\begin{biblist}

\bib{BaKu}{article}{
   author={Bagby, Richard J.},
   author={Kurtz, Douglas S.},
   title={$L({\rm log}\,L)$ spaces and weights for the strong maximal
   function},
   journal={J. Analyse Math.},
   volume={44},
   date={1984/85},
   pages={21--31},
   issn={0021-7670},
   review={\MR{801285 (87c:42018)}},
}

\bib{CLMP}{article}{
	author={Cabrelli, Carlos},
	author={Lacey, Michael T.},
	author={Molter, Ursula},
	author={Pipher, Jill C.},
	title={Variations on the theme of Journ\'e's lemma},
	journal={Houston J. Math.},
	volume={32},
	date={2006},
	number={3},
	pages={833--861},
	issn={0362-1588},
	review={\MR{2247912 (2007e:42011)}},
		}

\bib{CF}{article}{
   author={Cordoba, A.},
   author={Fefferman, R.},
   title={A geometric proof of the strong maximal theorem},
   journal={Ann. of Math. (2)},
   volume={102},
   date={1975},
   number={1},
   pages={95--100},
   issn={0003-486X},
   review={\MR{0379785 (52 \#690)}},
}

\bib{DaPe}{article}{
   author={Dalenc, Laurent},
   author={Petermichl, Stefanie},
   title={A lower bound criterion for iterated commutators},
   journal={J. Funct. Anal.},
   volume={266},
   date={2014},
   number={8},
   pages={5300--5320},
   issn={0022-1236},
   review={\MR{3177337}},
   doi={10.1016/j.jfa.2014.02.015},
}

\bib{DW}{article}{
   author={Dindo{\v{s}}, Martin},
   author={Wall, Treven},
   title={The sharp $A_p$ constant for weights in a reverse-H\"older
   class},
   journal={Rev. Mat. Iberoam.},
   volume={25},
   date={2009},
   number={2},
   pages={559--594},
   issn={0213-2230},
   review={\MR{2569547 (2011b:42041)}},
   doi={10.4171/RMI/576},
}

\bib{DMRO}{article}{
   author={Duoandikoetxea, Javier},
   author={Mart{\'{\i}}n-Reyes, Francisco J.},
   author={Ombrosi, Sheldy},
   title={Calder\'on weights as Muckenhoupt weights},
   journal={Indiana Univ. Math. J.},
   volume={62},
   date={2013},
   number={3},
   pages={891--910},
   issn={0022-2518},
   review={\MR{3164849}},
   doi={10.1512/iumj.2013.62.4971},
}

\bib{F}{article}{
   author={Fefferman, R.},
   title={Strong differentiation with respect to measures},
   journal={Amer. J. Math.},
   volume={103},
   date={1981},
   number={1},
   pages={33--40},
   issn={0002-9327},
   review={\MR{601461 (83g:42009)}},
}

\bib{laceyferg2002}{article}{
   author={Ferguson, Sarah H.},
   author={Lacey, Michael T.},
   title={A characterization of product BMO by commutators},
   journal={Acta Math.},
   volume={189},
   date={2002},
   number={2},
   pages={143--160},
   issn={0001-5962},
   review={\MR{1961195 (2004e:42026)}},
}
\bib{Fu}{article}{
   author={Fujii, Nobuhiko},
   title={Weighted bounded mean oscillation and singular integrals},
   journal={Math. Japon.},
   volume={22},
   date={1977/78},
   number={5},
   pages={529--534},
   issn={0025-5513},
   review={\MR{0481968 (58 \#2058)}},
}

\bib{GaRu}{book}{
   author={Garc{\'{\i}}a-Cuerva, Jos{\'e}},
   author={Rubio de Francia, Jos{\'e} L.},
   title={Weighted norm inequalities and related topics},
   series={North-Holland Mathematics Studies},
   volume={116},
   note={Notas de Matem\'atica [Mathematical Notes], 104},
   publisher={North-Holland Publishing Co.},
   place={Amsterdam},
   date={1985},
   pages={x+604},
   isbn={0-444-87804-1},
   review={\MR{807149 (87d:42023)}},
}

\bib{Gar}{book}{
   author={Garnett, J. B.},
   title={Bounded analytic functions},
   series={Graduate Texts in Mathematics},
   volume={236},
   edition={1},
   publisher={Springer},
   place={New York},
   date={2007},
   pages={xiv+459},
   isbn={978-0-387-33621-3},
   isbn={0-387-33621-4},
   review={\MR{2261424 (2007e:30049)}},
}

\bib{Guzdif}{article}{
   author={de Guzm{\'a}n, Miguel},
   title={Differentiation of integrals in ${\bf R}^{n}$},
   conference={
      title={Measure theory},
      address={Proc. Conf., Oberwolfach},
      date={1975},
   },
   book={
      publisher={Springer},
      place={Berlin},
   },
   date={1976},
   pages={181--185. Lecture Notes in Math., Vol. 541},
   review={\MR{0476978 (57 \#16523)}},
}

\bib{hlp}{article}{
		Author = {Hagelstein, P. A.},
		Author = {Luque, T.},
		Author = {Parissis, I.},
		Eprint = {1304.1015},
		Title = {Tauberian conditions, Muckenhoupt weights, and differentiation properties of weighted bases},
		Url = {http://arxiv.org/abs/1304.1015},
		journal={to appear in Trans. Amer. Math. Soc.},
		Year = {2013}}

\bib{hp1}{article}{
			Author = {Hagelstein, Paul A.},
			Author = {Parissis, Ioannis},
			Eprint = {1310.3771},
			Title = {Solyanik estimates in harmonic analysis},
			Url = {http://arxiv.org/abs/1310.3771},
			journal={to appear in Springer Proceedings in Mathematics \& Statistics},
			Year = {2014}}

\bib{hp2}{article}{
			Author = {Hagelstein, Paul A.},
			Author = {Parissis, Ioannis},
			Eprint = {1405.6631},
			Title = {Weighted Solyanik Estimates for the Hardy-Littlewood maximal operator and embedding of $A_\infty$ into $A_p$},
			Url = {http://arxiv.org/abs/1405.6631},
			journal={},
			Year = {2014}}

\bib{hs}{article}{
   author={Hagelstein, P. A.},
   author={Stokolos, A.},
   title={Tauberian conditions for geometric maximal operators},
   journal={Trans. Amer. Math. Soc.},
   volume={361},
   date={2009},
   number={6},
   pages={3031--3040},
   issn={0002-9947},
   review={\MR{2485416 (2010b:42023)}},
}

\bib{Hayes}{article}{
   author={Hayes, C. A., Jr.},
   title={A condition of halo type for the differentiation of classes of
   integrals},
   journal={Canad. J. Math.},
   volume={18},
   date={1966},
   pages={1015--1023},
   issn={0008-414X},
   review={\MR{0199318 (33 \#7466)}},
}

\bib{Hru}{article}{
   author={Hru{\v{s}}{\v{c}}ev, Sergei V.},
   title={A description of weights satisfying the $A_{\infty }$ condition
   of Muckenhoupt},
   journal={Proc. Amer. Math. Soc.},
   volume={90},
   date={1984},
   number={2},
   pages={253--257},
   issn={0002-9939},
   review={\MR{727244 (85k:42049)}},
   doi={10.2307/2045350},
}

\bib{HytP}{article}{
   author={Hyt{\"o}nen, Tuomas},
   author={P{\'e}rez, Carlos},
   title={Sharp weighted bounds involving $A_\infty$},
   journal={Anal. PDE},
   volume={6},
   date={2013},
   number={4},
   pages={777--818},
   issn={2157-5045},
   review={\MR{3092729}},
}

\bib{HytPR}{article}{
   author={Hyt{\"o}nen, Tuomas},
   author={P{\'e}rez, Carlos},
   author={Rela, Ezequiel},
   title={Sharp reverse H\"older property for $A_\infty$ weights on
   spaces of homogeneous type},
   journal={J. Funct. Anal.},
   volume={263},
   date={2012},
   number={12},
   pages={3883--3899},
   issn={0022-1236},
   review={\MR{2990061}},
}

\bib{JMZ}{article}{
   author={Jessen, B.},
   author={Marcinkiewicz, J.},
   author={Zygmund, A.},
   title={Note on the differentiability of multiple integrals},
   journal={Fund. Math.},
   volume={25},
   date={1935},
   number={},
   pages={217--234},
}
 \bib{kint}{article}{
   author={Kinnunen, Juha},
   title={Sharp results on reverse H\"older inequalities},
   journal={Ann. Acad. Sci. Fenn. Ser. A I Math. Dissertationes},
   number={95},
   date={1994},
   pages={34},
   issn={0355-0087},
   review={\MR{1283432 (96a:26018)}},
}

\bib{Kin}{article}{
   author={Kinnunen, Juha},
   title={A stability result on Muckenhoupt's weights},
   journal={Publ. Mat.},
   volume={42},
   date={1998},
   number={1},
   pages={153--163},
   issn={0214-1493},
   review={\MR{1628162 (99e:42025)}},
}

\bib{laceyterwilleger}{article}{
   author={Lacey, Michael },
   author={Terwilleger, Erin},
   title={Hankel operators in several complex variables and product BMO},
   journal={Houston J. Math.},
   volume={35},
   date={2009},
   number={1},
   pages={159--183},
   issn={0362-1588},
   review={\MR{2491875 (2010c:47071)}},
}

\bib{LM}{article}{
   author={Lerner, Andrei K.},
   author={Moen, Kabe},
   title={Mixed $A_p$-$A_\infty$ estimates with one supremum},
   journal={Studia Math.},
   volume={219},
   date={2013},
   number={3},
   pages={247--267},
   issn={0039-3223},
   review={\MR{3145553}},
}

\bib{LOSH}{article}{
   author={Long, Rui Lin},
   author={Shen, Zhong Wei},
   title={A note on a covering lemma of A. Cordoba and R. Fefferman},
   note={A Chinese summary appears in Chinese Ann.\ Math.\ Ser.\ A {\bf 9}
   (1988), no.\ 4, 506},
   journal={Chinese Ann. Math. Ser. B},
   volume={9},
   date={1988},
   number={3},
   pages={283--291},
   issn={0252-9599},
   review={\MR{968464 (91b:42037)}},
}
\bib{Lu}{book}{
   author={Luque, Teresa},
   title={Weighted Inequalities and Multiparameter Harmonic Analysis},
   note={Thesis (Ph.D.)--IMUS and University of Seville, Spain},
   date={2014},
   pages={93},
}

\bib{Muck}{article}{
   author={Muckenhoupt, Benjamin},
   title={Weighted norm inequalities for the Hardy maximal function},
   journal={Trans. Amer. Math. Soc.},
   volume={165},
   date={1972},
   pages={207--226},
   issn={0002-9947},
   review={\MR{0293384 (45 \#2461)}},
}
\bib{sjolinsoria}{article}{
   author={Sj\"olin, Per},
   author={Soria, Fernando},
   title={Remarks on a theorem by N. Yu Antonov},
   journal={Studia Math.},
   volume={158},
   date={2003},
   number={1},
   pages={79--97},
   review={\MR{2014553 (2004i:42006)}},
}

\bib{solyanik}{article}{
   author={Solyanik, A. A.},
   title={On halo functions for differentiation bases},
   language={Russian, with Russian summary},
   journal={Mat. Zametki},
   volume={54},
   date={1993},
   number={6},
   pages={82--89, 160},
   issn={0025-567X},
   translation={
      journal={Math. Notes},
      volume={54},
      date={1993},
      number={5-6},
      pages={1241--1245 (1994)},
      issn={0001-4346},
   },
   review={\MR{1268374 (95g:42033)}},
}

\bib{soria}{article}{
   author={Soria, Fernando},
   title={Note on differentiation of integrals and the halo conjecture},
   journal={Studia Math.},
   volume={81},
   date={1985},
   number={1},
   pages={29--36},
   issn={0039-3223},
   review={\MR{818168 (87j:42058)}},
}

\bib{V}{article}{
   author={Vasyunin, V. I.},
   title={The exact constant in the inverse H\"older inequality for
   Muckenhoupt weights},
   language={Russian, with Russian summary},
   journal={Algebra i Analiz},
   volume={15},
   date={2003},
   number={1},
   pages={73--117},
   issn={0234-0852},
   translation={
      journal={St. Petersburg Math. J.},
      volume={15},
      date={2004},
      number={1},
      pages={49--79},
      issn={1061-0022},
   },
   review={\MR{1979718 (2004h:42017)}},
   doi={10.1090/S1061-0022-03-00802-1},
}

\bib{W1}{article}{
   author={Wilson, J. Michael},
   title={Weighted inequalities for the dyadic square function without
   dyadic $A_\infty$},
   journal={Duke Math. J.},
   volume={55},
   date={1987},
   number={1},
   pages={19--50},
   issn={0012-7094},
   review={\MR{883661 (88d:42034)}},
   doi={10.1215/S0012-7094-87-05502-5},
}

\bib{W2}{book}{
   author={Wilson, Michael},
   title={Weighted Littlewood-Paley theory and exponential-square
   integrability},
   series={Lecture Notes in Mathematics},
   volume={1924},
   publisher={Springer, Berlin},
   date={2008},
   pages={xiv+224},
   isbn={978-3-540-74582-2},
   review={\MR{2359017 (2008m:42034)}},
}

\end{biblist}
\end{bibsection}

\end{document}